\theoremstyle{plain}
\newtheorem{theorem}{Theorem}[section]
\newtheorem{lemma}[theorem]{Lemma}
\newtheorem{corollary}[theorem]{Corollary}
\theoremstyle{definition}
\newcommand\naturals{\mathbb{N}}
\newcommand\reals{\mathbb{R}}
\newcommand\complexes{\mathbb{C}}
\newcommand\norm[1]{\left\lVert#1\right\rVert}
\newcommand\ev[2]{\left<#1,#2\right>}
\DeclareMathOperator{\im}{Im}
\DeclareMathOperator{\re}{Re}
\DeclareMathOperator{\supp}{supp}
\DeclareMathOperator{\interior}{int}
\newcommand\bigoh[1]{O\left(#1\right)}
\newcommand\ds{\mathcal{D}}
\newcommand\es{\mathcal{E}}
\newcommand\sss{\mathcal{S}}
\newcommand\des{\es^{\prime}}
\newcommand\despaceomega[1]{\testspaceomega{\des}}
\newcommand\beurou{\ast}
\newcommand\utestspace[3]{#1^{#2}\left(#3\right)}
\newcommand\utestspacereals[3]{\utestspace{#1}{#2}{\reals^{#3}}}
\newcommand\udspacereals[2]{\utestspacereals{\ds}{#1}{#2}}
\newcommand\laplace[2]{\mathcal{L}\left\{#1 ; #2\right\}}
\numberwithin{equation}{section}
\begin{document}

\title[Multidimensional Tauberian theorem for Laplace transforms]{A multidimensional Tauberian theorem for Laplace transforms of ultradistributions}

\author[L. Neyt]{Lenny Neyt}
\thanks{L. Neyt gratefully acknowledges support by Ghent University, through the BOF-grant 01J11615.}

\author[J. Vindas]{Jasson Vindas}

\thanks {The work of J. Vindas was supported by Ghent University through the BOF-grants 01J11615 and 01J04017.}

\address{Department of Mathematics: Analysis, Logic and Discrete Mathematics\\ Ghent University\\ Krijgslaan 281\\ 9000 Gent\\ Belgium}
\email{lenny.neyt@UGent.be}
\email{jasson.vindas@UGent.be}

\subjclass[2010]{40E05, 44A10, 46F12}

 \keywords{Laplace transforms; multidimensional Tauberian theorems; ultradistributions; Gelfand-Shilov spaces; quasiasymptotic behavior}

\begin{abstract}
We obtain a multidimensional Tauberian theorem for Laplace transforms of Gelfand-Shilov ultradistributions. The result is derived from a Laplace transform characterization of bounded sets in spaces of ultradistributions with supports in a convex acute cone of $\mathbb{R}^{n}$, also established here.
\end{abstract}

\maketitle

\section{Introduction}

In 1976, Vladimirov obtained an important multidimensional generalization of the Hardy-Littlewood-Karamata Tauberian theorem \cite{vladimirov1976}. Multidimensional Tauberian theorems were then systematically investigated by him, Drozhzhinov, and Zav'yalov, and their approach resulted in a powerful Tauberian machinery for multidimensional Laplace transforms
of Schwartz distributions. Such results have been very useful in probability theory \cite{Yakimiv} and mathematical physics  \cite{BerceanuGheorghe,d-z2008,Vladimirov1979}. 
Tauberian theorems for other integral transforms of generalized functions have been extensively studied by several authors as well, see e.g. \cite{d-z1998, d-z2003,pilipovic-stankovic1997,P-V2014,P-V2019}. We refer to the monographs \cite{stevan2011asymptotic,vladimirovbook,vladimirov1988tauberian} for accounts on the subject and its applications; see also the recent survey article \cite{drozhzhinov2016}. 

The aim of this article is to extend the so-called general Tauberian theorem for the dilation group \cite[Chapter 2]{vladimirov1988tauberian} from distributions  to ultradistributions. Our considerations apply to Laplace transforms of elements of $\mathcal{S}'^{\ast}_{\dagger}[\Gamma]$, the space of Gelfand-Shilov ultradistributions with supports in a closed convex acute cone $\Gamma$ of $\mathbb{R}^{n}$ where $\ast$ and $\dagger$ stand for the Beurling and Roumieu cases of weight sequences satisfying mild assumptions  (see Section \ref{preliminaries section} for definitions and notation). In Section \ref{Laplace and bounded sets} we provide characterizations of bounded sets and convergent sequences in $\mathcal{S}'^{\ast}_{\dagger}[\Gamma]$ in terms of Laplace transform growth estimates; interestingly, our approach to the desired Laplace transform characterization is based on a useful convolution average description of bounded sets of $\mathcal{S}'^{\ast}_{\dagger}(\mathbb{R}^{n})$, originally established in \cite{dimovski2016} (cf. \cite{P-P-Vconv}) but improved here by relaxing hypotheses on the weight sequences. Those results are employed in Section \ref{Section: Tauberian theorem} to derive a Tauberian theorem in which the quasiasymptotic behavior of an ultradistribution is deduced from asymptotic properties of its Laplace transform. Finally, as a natural refinement of the main result of Section \ref{Laplace and bounded sets} when the weight sequences and the cone satisfy stronger regularity conditions, we prove in Section  \ref{Section Laplace isomorphism} that the Laplace transform is an isomorphism of locally convex spaces between  $\mathcal{S}'^{\ast}_{\dagger}[\Gamma]$ and a certain space of holomorphic functions on the tube domain $\mathbb{R}^{n}+i\operatorname*{int}\Gamma^{\ast}$, with $\Gamma^{\ast}$ the conjugate cone of $\Gamma$.

\section{Preliminaries}
\label{preliminaries section}
We collect in this section several useful notions that play a role in the
article.
\subsection{Weight sequences} Given a weight sequence $\{ M_{p} \}_{p \in \naturals}$ of positive real numbers, we associate to it the sequences $m_{p} = M_{p} / M_{p - 1}$, $p \geq 1$, and $M_{p}^{*} = M_{p} / p!$. Throughout this article we will often make use of some of the following conditions:
	\begin{description}
		\item[$(M.1)$] $M_{p}^{2} \leq M_{p - 1} M_{p + 1}$ , $p \geq 1$ ; 
		\item[$(M.1)^{*}$] $(M_{p}^{*})^{2} \leq M_{p - 1}^{*} M_{p + 1}^{*} , p \geq1$;
		\item[$(M.2)'$] $M_{p + 1} \leq A H^{p} M_{p}$, $p \in \naturals$, for certain constants $A, H\geq 1$;
		\item[$(M.2)$] $M_{p+q} \leq A H^{p+q} M_{p} M_{q}$, $p,q \in \naturals$, for certain constants $A, H\geq 1$;
	        \item[$(M.3)'$] $\sum_{p = 1}^{\infty} 1/m_p < \infty$;
		\item[$(M.3)$] $ \sum_{p=q}^{\infty} 1/m_p \leq c_0 q /m_{q}$, $q\geq 1$, for a certain constant $c_0$.
	\end{description}
The meaning of all these conditions is very well explained in \cite{ultradistributions1}. Whenever we consider weight sequences, we assume they satisfy  at least $(M.1)$.
For multi-indices $\alpha \in \naturals^{n}$, we will simply denote $M_{|\alpha|}$ by $M_{\alpha}$.  As usual the relation $M_{p} \prec N_{p}$ between two such sequences means that for any $h > 0$ there is an $L = L_{h} > 0$ for which $M_{p} \leq L h^{p} N_{p}$, $p \in \naturals$. The associated function of the sequence $M_{p}$ is given by
	\begin{equation}
		\label{eq:assfunc}
		M(t) := \sup_{p \in \naturals} \log \frac{t^{p} M_{0}}{M_{p}}, \qquad t>0,
	\end{equation}
and $M(0)=0$.  
It increases faster than $\log t$ as $t\to\infty$ (cf. \cite[p.~48]{ultradistributions1}). The associated function of the sequence $M_{p}^{*}$ will be denoted by $M^{*}(t)$. 

Throughout this text we shall often exploit the following bounds:
			\begin{itemize}
				\item If $M_{p}$ satisfies $(M.2)'$, then for any $k > 0$
					\begin{equation}
						\label{eq:M2prime}
						M(t) - M(k t) \leq - \frac{\log(t / A) \log k}{\log H} , \qquad t > 0 . 
					\end{equation}
				\item $M_{p}$ satisfies $(M.2)$ if and only if
					\begin{equation}
						\label{eq:M2}
						2 M(t) \leq M(H t) + \log(A M_{0}) . 
					\end{equation}
				\item If $M_p$ satisfies $(M.1)^{\ast}$, we have, for some $A'>0$,
					\begin{equation}
						\label{eq:M*bound}
						M^{*}\left(\frac{t}{4(m_{1} + 1) M(t)}\right) \leq M(t) + A' , \qquad t \geq m_{1} + 1 .
					\end{equation}
			\end{itemize}
Indeed the first and second statements are \cite[Proposition~3.4]{ultradistributions1} and \cite[Proposition~3.6]{ultradistributions1}, while the third one is shown in \cite[Lemma~5.2.5, p.~96]{carmichael2007boundary}.
 We shall also consider the following two sets
	\begin{align*} 
		\mathfrak{R}^{(M_{p})}   &:= \{ (\ell_{p})_{p \in \naturals^{+}} : \ell_{p} = \ell \text{ for some } \ell > 0 \} , \\
		\mathfrak{R}^{\{M_{p}\}} &:= \{ (\ell_{p})_{p \in \naturals^{+}} : \ell_{p} \nearrow \infty \text{ and } \ell_{p} > 0 , \forall p \in \naturals  \} , 
	\end{align*}
and use $\mathfrak{R}^{\beurou}$ as a common notation. Naturally, these two sets do not depend on $M_p$ at all, but it will be very convenient for us to make a notational distinction between the Beurling and Roumieu case of a weight sequence when dealing with ultradistributions. For any $(\ell_{p}) \in \mathfrak{R}^{\ast}$, we write $L_p=\prod_{j = 1}^{p} \ell_{j}$ and denote the associated function of $M_{p}L_p$ as $M_{\ell_{p}}(t)$. The reader should keep in mind that whenever $(M.1)$ holds one has the ensuing useful assertions \cite[Lemma~4.5, p. 417]{D-V-V2018} on the growth of a function $g:[0,\infty)\to[0,\infty)$
\begin{equation} 
\label{eqequivalencegrowth1}
\forall h>0:\: g(t) = O(e^{M(ht)}) \iff \exists (\ell_{p})\in \mathfrak{R}^{\left\{M_p\right\}}:\: g(t) = O(e^{M_{\ell_p}(t)})
\end{equation}
and 
\begin{equation} 
\label{eqequivalencegrowth2}
\forall (\ell_{p})\in\mathfrak{R}^{\left\{M_p\right\}}:\: g(t) = O(e^{-M_{\ell_p}(t)}) \iff \exists h>0:\: g(t) = O(e^{-M(ht)}).
\end{equation}
 
It is also important to point out that if $M_{p}$ satisfies $(M.2)$ or $(M.2)'$,  then for any given $(\ell_p)\in\mathfrak{R}^{\beurou}$ one can always find a $(k_{p}) \in \mathfrak{R}^{\beurou}$ such that $k_{p} \leq \ell_{p}$, $\forall p \in \naturals$, and $M_{p}K_p$ satisfies the same condition as $M_{p}$. For the $(M_{p})$-case this is trivial, whereas the assertion for the $\{M_{p}\}$-case directly follows from \cite[Lemma 2.3]{BojangLaplaceUltradistr}. 
	
\subsection{Ultradistributions} We now introduce the spaces of test functions and ultradistributions that we need in this work. Let $M_p$ and $N_p$ be two weight sequences.  We will always assume that the sequence $M_p$ satisfies the conditions $(M.1)$, $(M.2)'$, and $(M.3)'$. On the other hand, our assumptions on $N_p$ are $(M.1)^{\ast}$ and $(M.2)$. Furthermore, whenever considering the Beurling case we assume in addition that $N_p$ fulfills
\begin{description}
\item [$(NA)$] $ p! \prec N_{p}$.
\end{description} 
Note that these assumptions ensure that $N_{\ell_{p}}(t)=o(t)$ \cite[Lemma 3.8 and Lemma~3.10, p.~52--53]{ultradistributions1}, $N^{\ast}_{\ell_p}(t)<\infty$ for all $t\geq 0$, and $N^{\ast}_{\ell_{p}}(t)\to\infty$ as $t\to\infty$ for any sequence $(\ell_p)\in \mathfrak{R}^{\beurou}$. If stronger assumptions on the weight sequences are needed, this will be explicitly stated in the corresponding statement.
 
Let us now define Gelfand-Shilov spaces with respect to the sequences $M_p$ and $N_p$. We use the common notation $\ast=(M_p),\{M_p\}$ and $\dagger=(N_p),\{N_p\}$ for the Beurling and Roumieu cases of the weight sequences.  For any $(a_{p}), (b_{p}) \in \mathfrak{R}^{\beurou}$ we consider the Banach space of all $\varphi \in C^{\infty}(\mathbb{R}^{n})$ such that
	\begin{equation} 
		\label{eq:norm}
		\norm{\varphi}_{(a_{p}),(b_p)} = \sup_{\alpha, \beta \in \naturals^{n}}\sup_{t\in\mathbb{R}^{n}} \frac{|t^{\beta} \varphi^{(\alpha)}(t)|}{A_{\alpha} M_{\alpha} B_{\beta} N_{\beta}} , 
	\end{equation}
and denote it by $\sss^{M_p, (a_{p})}_{N_p, (b_{p})}(\mathbb{R}^{n})$. Then, we define the test function spaces
	\[ \sss^{\beurou}_{\dagger}(\mathbb{R}^{n}) := \varprojlim_{(a_{p}), (b_{p}) \in \mathfrak{R}^{*}} \sss^{M_p,(a_{p})}_{N_p, (b_{p})}(\mathbb{R}^{n}) , \]
and consider their duals, the ultradistribution spaces $\sss^{\prime \beurou}_{\dagger}(\mathbb{R}^{n})$ \cite{carmichael2007boundary, P-P-Vconv}.  As classically done, the Roumieu type space $\sss^{\beurou}_{\dagger}(\mathbb{R}^{n})$ could have also be introduced via an inductive limit, and that definition coincides (algebraically and topologically) with the projective description given here (see e.g. \cite{D-V-Roumieuind}).  One has that $\mathcal{S}^{(M_p)}_{(N_p)}(\mathbb{R}^{n})$ is an $(FS)$-space, while $\mathcal{S}^{\{M_p\}}_{\{N_p\}}(\mathbb{R}^{n})$ is $(DFS)$. 

The subspace of $\sss^{\beurou}_{\dagger}(\mathbb{R}^{n})$ consisting of compactly supported elements is denoted as usual as $\mathcal{D}^{\ast}(\mathbb{R}^{n})$ (it is non-trivial \cite{ultradistributions1} because of $(M.3)'$) and we write $\mathcal{D}^{\beurou}_{K}$ for those elements of $\mathcal{D}^{\ast}_{K}$ whose support is contained in a given compact subset $K\subset\mathbb{R}^{n}$. Similarly, $\mathcal{E}^{\ast}(\mathbb{R}^{n})$ stands for the space of all $\ast$-ultradifferentiable functions on $\mathbb{R}^{n}$. These spaces are topologized in the canonical way \cite{ultradistributions1, ultradistributions3}.

\subsection{Laplace transform}  Throughout the article
 $\Gamma \subseteq \reals^{n}$ stands for a (non-empty) closed, convex, and acute cone (with vertex at the origin).
Acute means that its conjugate cone, 
	\[ \Gamma^{*} := \{ y \in \reals^{n} :\: y \cdot u \geq 0,\: \forall u \in \Gamma \} , \]
has non-empty interior and we set $C = \interior \Gamma^{*}$. Note that $\Gamma^{\ast \ast}=\Gamma$. The distance of a point $y\in\mathbb{R}^{n}$ to the boundary of $C$ is denoted as
	\[ \Delta_{C}(y) := d(y, \partial C).\]
We will often make use of the estimate
	 (cf. \cite[p. 61]{vladimirovbook})
			\begin{equation} 
				\label{eq:dotestimate}
				y \cdot u \geq \Delta_{C}(y) |u| , \qquad \forall u \in \Gamma, y \in C. 
			\end{equation}
The tube domain $T^{C}$ with base $C$ is the set
	\[ T^{C} := \reals^{n} + i C \subseteq \complexes^{n} . \]
For any $\varepsilon > 0$, we denote by $\Gamma^{\varepsilon}$ the open set $\Gamma + B(0, \varepsilon)$.  We define
\[\mathcal{S}^{\prime\beurou}_{\dagger}[\Gamma]=\{f\in\mathcal{S}^{\prime\beurou}_{\dagger}(\mathbb{R}^{n}):\: \operatorname*{supp}f \subseteq \Gamma\};
\]
it is a closed subspace of $\mathcal{S}^{\prime\beurou}_{\dagger}(\mathbb{R}^{n})$.

Let $\eta : \reals^{n} \rightarrow \reals$ be a function such that $\eta(\xi) = 1$ for $\xi \in \Gamma$ and $\eta(\xi) e^{i z \cdot \xi} \in \sss^{\beurou}_{\dagger}(\reals^{n})$ for any $z \in T^{C}$.  The \emph{Laplace transform} of $f\in\mathcal{S}^{\prime\beurou}_{\dagger}[\Gamma]$ is then the holomorphic function
			%
			\[
				\laplace{f}{z} := \ev{f(\xi)}{\eta(\xi) e^{i z \cdot \xi}} , \qquad z \in T^{C} .
			\]
One can always find such an $\eta$ (see e.g. Lemma \ref{l:Mp-Gamma-func-existence} below) and the definition of the Laplace transform does not depend on this function. We write $z=x+iy$ for complex variables.

\section{Laplace transform characterization of bounded sets of  $\sss^{\prime\beurou}_{\dagger}[\Gamma]$}
\label{Laplace and bounded sets}
In this section we shall characterize those subsets of $\sss^{\prime\beurou}_{\dagger}[\Gamma]$  that are bounded (with respect to the relative topology inherited from $\sss^{\prime\beurou}_{\dagger}(\reals^{n})$) via bounds on the Laplace transforms of their elements.  The following theorem is our main result in this section.

\begin{theorem}
		\label{t:boundednessLaplace}
		Let $B \subseteq \sss^{\prime\beurou}_{\dagger}[\Gamma]$. 
\begin{enumerate}
\item [(i)] If $B$ is a bounded set, then, there is $(\ell_{p})\in\mathfrak{R}^{\beurou}$ for which, given any $\varepsilon>0$, there is $L = L_{\varepsilon} > 0$ such that  for all $f \in B$
			\begin{equation}
				\label{eq:boundednessLaplace1}
				\left| \laplace{f}{z} \right| \leq L \exp\left( \varepsilon |\im z| + M_{\ell_{p}}(|z|) + N_{\ell_{p}}^{*}\left(\frac{1}{\Delta_{C}(\im  z)} \right) \right) , \qquad z \in T^{C} . 
			\end{equation}
\item [(ii)] Conversely, suppose there are $\omega\in C$, $\sigma_0>0$, $L = L_{B} > 0$, and $(\ell_{p})\in\mathfrak{R}^{\beurou}$  such that
\begin{equation}
				\label{eq:boundednessLaplace2}
				\left| \laplace{f}{x+i\sigma \omega} \right| \leq L \exp\left(M_{\ell_{p}}(|x|) + N_{\ell_{p}}^{*}\left(\frac{1}{\sigma} \right) \right),
			\end{equation}
for all $f\in B$, $x\in\mathbb{R}^{n}$, and $\sigma\in (0,\sigma_0]$, then $B$ is a bounded subset of $\sss^{\prime\beurou}_{\dagger}[\Gamma]$.
\end{enumerate}
	\end{theorem}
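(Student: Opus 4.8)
\emph{Part (i).} The plan is to reduce the estimate to the equicontinuity of $B$. Since the test space $\sss^{\ast}_{\dagger}(\reals^{n})$ is barrelled (being $(FS)$ resp.\ $(DFS)$), the bounded set $B\subseteq\sss^{\prime\ast}_{\dagger}(\reals^{n})$ is equicontinuous, and by the projective description of the topology there are $(a_{p}),(b_{p})\in\mathfrak{R}^{\ast}$ and $C>0$ with $|\ev{f}{\varphi}|\leq C\norm{\varphi}_{(a_{p}),(b_p)}$ for every $f\in B$ and every test function $\varphi$. Fixing $\varepsilon>0$, I would take a cut-off $\eta=\eta_{\varepsilon}$ with $\eta\equiv1$ on $\Gamma$, $\supp\eta\subseteq\Gamma^{\varepsilon}$, and satisfying $\ast$-ultradifferentiable bounds, and then estimate $\norm{\eta(\cdot)e^{iz\cdot(\cdot)}}_{(a_{p}),(b_p)}$. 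Expanding $(\eta(\xi)e^{iz\cdot\xi})^{(\alpha)}$ by the Leibniz rule, the terms where every derivative hits the exponential dominate: on $\supp\eta\subseteq\Gamma^{\varepsilon}$ one writes $\xi=u+v$ with $u\in\Gamma$, $|v|\leq\varepsilon$, so that $|e^{iz\cdot\xi}|=e^{-y\cdot\xi}\leq e^{\varepsilon|y|}e^{-\Delta_{C}(y)|u|}$ by \eqref{eq:dotestimate}, which produces the factor $e^{\varepsilon|\im z|}$. The supremum over $\alpha$ of $|z|^{|\alpha|}/(A_{\alpha}M_{\alpha})$ is, by definition of the associated function, of the order $e^{M_{\ell_{p}}(|z|)}$, while the supremum over $\beta$ of $|u|^{|\beta|}e^{-\Delta_{C}(y)|u|}/(B_{\beta}N_{\beta})$ — optimised in $|u|$ near $|\beta|/\Delta_{C}(y)$ and rewritten through $N_{p}=p!\,N^{*}_{p}$ and Stirling — is of the order $e^{N^{*}_{\ell_{p}}(1/\Delta_{C}(y))}$. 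The resulting $(\ell_{p})$ is built from $(a_{p}),(b_{p})$ and is independent of $\varepsilon$; the derivatives of $\eta_{\varepsilon}$, supported in the collar $\Gamma^{\varepsilon}\setminus\Gamma$, contribute only constants absorbed into $L_{\varepsilon}$. This yields \eqref{eq:boundednessLaplace1}.

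\emph{Part (ii).} Here I would prove that $B$ is weakly bounded and invoke the convolution-average description of bounded subsets of $\sss^{\prime\ast}_{\dagger}(\reals^{n})$; as $\sss^{\prime\ast}_{\dagger}[\Gamma]$ is a closed subspace and each $f\in B$ is supported in $\Gamma$, boundedness in the ambient space is enough. The bridge to \eqref{eq:boundednessLaplace2} is an integral representation: for $\phi\in\mathcal{D}^{\ast}(\reals^{n})$, whose Fourier transform $\hat\phi$ is entire with Paley--Wiener--Komatsu bound $|\hat\phi(t-i\sigma\omega)|\leq C_{0}e^{H\sigma|\omega|}e^{-M(\lambda|t|)}$, one shifts the contour in $\phi(x-\xi)=(2\pi)^{-n}\int\hat\phi(t)e^{it\cdot(x-\xi)}\,dt$ by $t\mapsto t-i\sigma\omega$ and pairs against $f$, using $\eta\equiv1$ on $\supp f$, to obtain
\[
(f*\phi)(x)=\frac{e^{\sigma\omega\cdot x}}{(2\pi)^{n}}\int_{\reals^{n}}\hat\phi(t-i\sigma\omega)\,e^{it\cdot x}\,\laplace{f}{-t+i\sigma\omega}\,dt ,\qquad \sigma\in(0,\sigma_{0}].
\]
Since $M_{\ell_{p}}(t)\leq M(ht)+C_{h}$ for suitable $h$ (Beurling) or every $h$ (Roumieu, by the remark around \eqref{eqequivalencegrowth1}), the decay of $\hat\phi$ absorbs $e^{M_{\ell_{p}}(|t|)}$, and inserting \eqref{eq:boundednessLaplace2} gives, uniformly in $f\in B$ and $\sigma\in(0,\sigma_{0}]$,
\[
|(f*\phi)(x)|\leq C_{\phi}\,L\,\exp\!\big(\sigma|\omega|\,|x|+N^{*}_{\ell_{p}}(1/\sigma)\big), \qquad C_{\phi}:=\sup_{0<\sigma\leq\sigma_{0}}\frac{1}{(2\pi)^{n}}\int|\hat\phi(t-i\sigma\omega)|\,e^{M_{\ell_{p}}(|t|)}\,dt<\infty.
\]

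It remains to optimise the exponent in $\sigma$. The Legendre-type duality between a weight sequence and its $\mathfrak{R}^{\ast}$-shift yields $\inf_{\sigma>0}\big(\sigma R+N^{*}_{\ell_{p}}(1/\sigma)\big)\leq N_{k_{p}}(\lambda R)+C'$ for suitable $(k_{p})\in\mathfrak{R}^{\ast}$ and $\lambda,C'>0$ (the restriction $\sigma\leq\sigma_{0}$ only affects bounded $R$, where the left side is anyway bounded), so $|(f*\phi)(x)|\leq C''\exp(N_{k_{p}}(\lambda|x|))$ uniformly over $B$; absorbing $\lambda$ via \eqref{eqequivalencegrowth1} puts this into the exact form required by the convolution-average characterization. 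Running this for the relevant family of $\phi$ then shows $B$ is bounded in $\sss^{\prime\ast}_{\dagger}(\reals^{n})$, hence in the closed subspace $\sss^{\prime\ast}_{\dagger}[\Gamma]$.

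\emph{Main obstacle.} I expect the crux to be the converse (ii): producing the contour-shifted representation of $f*\phi$ together with a bound that is \emph{uniform in} $\sigma\in(0,\sigma_{0}]$ — which forces tight control of $\int|\hat\phi(t-i\sigma\omega)|e^{M_{\ell_{p}}(|t|)}\,dt$ through Paley--Wiener--Komatsu estimates for well-chosen test functions — and then converting the $\sigma$-dependent estimate into genuine $N$-growth in $|x|$ by the associated-function duality, in such a way that the output matches the convolution characterization. Part (i) should be comparatively routine once equicontinuity is invoked, the only delicate points being the extraction of a single $(\ell_{p})$ independent of $\varepsilon$ and the clean separation of the three exponential contributions.
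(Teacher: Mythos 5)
Your proposal is correct and follows essentially the same route as the paper: part (i) via equicontinuity plus the norm estimate for $\eta_{\varepsilon}(\xi)e^{iz\cdot\xi}$ (the paper's Lemma \ref{l:expbound}), and part (ii) via the convolution characterization of bounded sets (Lemma \ref{p:boundednessconvolution}) combined with a contour-shifted inverse Laplace representation of $f*\phi$ and an optimization in $\sigma$. Your ``Legendre-type duality'' $\inf_{\sigma}(\sigma R+N^{*}_{\ell_p}(1/\sigma))\lesssim N_{k_p}(\lambda R)$ is exactly what the paper implements by the explicit choice $\gamma(t)\asymp N_{\ell_p}(|t|)/|t|$ together with the inequality \eqref{eq:M*bound} and $(M.2)$, using $N_{\ell_p}(t)=o(t)$ to keep the optimal $\sigma$ in $(0,\sigma_0]$.
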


Before proving Theorem \ref{t:boundednessLaplace}, let us discuss an important consequence. Namely, we shall derive from it a characterization of convergent sequences of $\sss^{\prime\beurou}_{\dagger}[\Gamma]$. Notice first that if a sequence $f_{k}\to g$ in  $\sss^{\prime\beurou}_{\dagger}(\mathbb{R}^{n})$ and $\operatorname*{supp} f_{k}\subseteq \Gamma$ for each $k$, one easily shows that 
$$
\lim_{k\to\infty}\laplace{f_{k}}{z}=\laplace{g}{z},
$$
and this limit holds uniformly for $z$ in compact subsets of $T^{C}$; furthermore, by Theorem \ref{t:boundednessLaplace}, the Laplace transforms of the $f_k$ satisfy bounds of the form \eqref{eq:boundednessLaplace1} uniformly in $k$. The converse also holds. In fact, the next result might be interpreted as a sort of Tauberian theorem.

	\begin{corollary}
		\label{t:generalTauberianTheorem}
		Let $(f_{k})_{k \in \naturals}$ be a sequence in $\sss^{\prime\beurou}_{\dagger}[\Gamma]$. Suppose that there is a non-empty open subset $\Omega\subseteq C$ such that for each $y\in\Omega$ the limit
	\begin{equation}
						\label{eq:generalTauberianTheorem-Laplaceconv}
						\lim_{k \rightarrow \infty} \laplace{f_{k}}{iy}
						\end{equation}	
exists. If there are $\omega\in C$, $\sigma_0>0$, and $(\ell_{p})\in\mathfrak{R}^{\beurou}$  such that
\begin{equation}
				\label{eq:generalTauberianTheorem-Bound}
			\sup_{k\in\mathbb{N}, \: x\in\mathbb{R}^{n},\: \sigma\in (0,\sigma_0]}	 \exp\left(-M_{\ell_{p}}(|x|) - N_{\ell_{p}}^{*}\left(\frac{1}{\sigma} \right) \right)\left| \laplace{f_k}{x+i\sigma \omega} \right| <\infty
			\end{equation}	
then
			\begin{equation}
				\label{eq:generalTauberianTheorem-conv}
				\lim_{k \rightarrow \infty} f_{k} = g  \quad \mbox{in } \sss^{\prime\beurou}_{\dagger}[\Gamma],
			\end{equation}
for some $g\in\sss^{\prime\beurou}_{\dagger}[\Gamma]$. In particular, the limit \eqref{eq:generalTauberianTheorem-Laplaceconv} is given by $\laplace{g}{iy}$.

\end{corollary}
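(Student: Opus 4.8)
The plan is to deduce the corollary from Theorem \ref{t:boundednessLaplace} together with a density/uniqueness argument. The bound \eqref{eq:generalTauberianTheorem-Bound} is exactly hypothesis \eqref{eq:boundednessLaplace2} of part (ii) of the theorem, applied to the set $B = \{f_k : k \in \naturals\}$. Hence Theorem \ref{t:boundednessLaplace}(ii) immediately tells us that $B$ is a bounded subset of $\sss^{\prime\beurou}_{\dagger}[\Gamma]$. The heart of the matter is then to upgrade boundedness plus pointwise convergence of the Laplace transforms on $i\Omega$ to genuine convergence of $(f_k)$ in the strong (or weak) dual topology.

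**First I would** exploit the good functional-analytic properties of the spaces at hand. In the Beurling case $\sss^{(M_p)}_{(N_p)}(\reals^{n})$ is an $(FS)$-space and in the Roumieu case $\sss^{\{M_p\}}_{\{N_p\}}(\reals^{n})$ is $(DFS)$; in both situations the test function space is reflexive and Montel, so its strong dual is Montel as well. Consequently, a bounded set in $\sss^{\prime\beurou}_{\dagger}(\reals^{n})$ is relatively compact, and on bounded sets of a Montel (hence separable) dual the weak-$*$ and strong topologies coincide on sequences. Since $\sss^{\prime\beurou}_{\dagger}[\Gamma]$ is a closed subspace, the same conclusions hold there. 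Thus from boundedness of $(f_k)$ I can extract, along any subsequence, a convergent sub-subsequence whose limit lies in $\sss^{\prime\beurou}_{\dagger}[\Gamma]$; it remains to pin down the limit uniquely.

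**The key uniqueness step** is where the pointwise hypothesis \eqref{eq:generalTauberianTheorem-Laplaceconv} enters. If $g_1$ and $g_2$ are two subsequential limits, then by the continuity of the Laplace transform remarked just before the corollary (convergence in $\sss^{\prime\beurou}_{\dagger}[\Gamma]$ forces pointwise convergence of Laplace transforms, uniformly on compacta of $T^C$), both $\laplace{g_1}{\cdot}$ and $\laplace{g_2}{\cdot}$ must agree with $\lim_k \laplace{f_k}{iy}$ for every $y\in\Omega$. Hence the holomorphic functions $\laplace{g_1}{z}$ and $\laplace{g_2}{z}$ coincide on the open set $i\Omega \subseteq T^C$, and by the identity theorem for holomorphic functions of several complex variables they coincide on all of the connected tube $T^C$. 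Since the Laplace transform is injective on $\sss^{\prime\beurou}_{\dagger}[\Gamma]$ (a standard fact, recoverable from the integral representation through the defining formula $\laplace{f}{z}=\ev{f(\xi)}{\eta(\xi)e^{iz\cdot\xi}}$ and the density of exponentials), this gives $g_1=g_2=:g$.

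**The main obstacle** I anticipate is the clean passage from ``every subsequence has a further subsequence converging to the same $g$'' to convergence of the full sequence, and more subtly the injectivity/identity-theorem argument in the Roumieu $(DFS)$ case, where I must be careful that subsequential limits genuinely inherit support in $\Gamma$ and that the Laplace transform remains well defined and injective under the relaxed weight-sequence hypotheses assumed here. Once injectivity and the unique identification of the limit are secured, the standard subsequence principle in a Montel space yields $f_k\to g$ in $\sss^{\prime\beurou}_{\dagger}[\Gamma]$, and evaluating the Laplace transform at $z=iy$ gives the final assertion that $\lim_k \laplace{f_k}{iy}=\laplace{g}{iy}$.
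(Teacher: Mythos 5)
Your proposal is correct and follows essentially the same route as the paper's proof: boundedness of $\{f_k\}$ via Theorem \ref{t:boundednessLaplace}(ii), relative compactness from the Montel property, and identification of all subsequential limits through the identity theorem for holomorphic functions together with the injectivity of the Laplace transform (inherited from that of the Fourier transform). The standard subsequence principle then yields convergence of the full sequence, exactly as in the paper.
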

	
	\begin{proof} Notice first that if two subsequences converge, respectively, to ultradistributions $g$ and $h$, the limits \eqref{eq:generalTauberianTheorem-Laplaceconv} tell us $\laplace{g}{iy}=\laplace{h}{iy}$ for all $y\in \Omega$. By uniqueness of holomorphic functions and the injectivity of the Laplace transform (which follows from that of the Fourier transform), we conclude $g=h$.  It therefore suffices to show that every arbitrary subsequence of the $f_k$ possesses a convergent subsequence in $\sss^{\prime\beurou}_{\dagger}[\Gamma]$, but this follows from the fact that $\sss^{\prime\beurou}_{\dagger}[\Gamma]$ is Montel
	because, in view of Theorem \ref{t:boundednessLaplace}, the estimate \eqref{eq:generalTauberianTheorem-Bound} is equivalent to $\{f_{k}:\: k\in\mathbb{N}\}$ being bounded in $\sss^{\prime\beurou}_{\dagger}[\Gamma]$ (and hence relatively compact). 	\end{proof}

We shall prove Theorem \ref{t:boundednessLaplace} using several lemmas. For (i), we need the following concept. A family $\{\eta_{\varepsilon}\}_{\varepsilon > 0}$ of non-negative smooth functions $\eta_{\varepsilon} : \reals^{n} \rightarrow [0,\infty)$ is called a $\beurou$-$\Gamma$-mollifier if for every $\varepsilon > 0$ the ensuing conditions hold
			\begin{enumerate}[(a)]
				\item $\eta_{\varepsilon}(\xi) = 1$ for $\xi \in \Gamma^{\varepsilon}$ while $\eta_{\varepsilon}(\xi) = 0$ for $\xi \notin \Gamma^{2 \varepsilon}$;
				\item for every $(\ell_{p}) \in \mathfrak{R}^{\beurou}$ there is a constant $H_{\ell_{p}, \varepsilon} > 0$ such that
					\begin{equation}
						\label{eq:Mp-Gamma-func-bound}
						\left| \eta^{(\alpha)}_{\varepsilon}(\xi) \right| \leq H_{\ell_{p}, \varepsilon} L_{\alpha} M_{\alpha} , \qquad \forall \xi \in \reals^{n}, \forall \alpha \in \naturals^{n} . 
					\end{equation}
			\end{enumerate}

	\begin{lemma}
		\label{l:Mp-Gamma-func-existence}
		There are $\beurou$-$\Gamma$-mollifiers.
	\end{lemma}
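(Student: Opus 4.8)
The plan is to obtain each $\eta_{\varepsilon}$ by mollifying the indicator function of an intermediate enlargement of $\Gamma$ with a compactly supported $\beurou$-ultradifferentiable kernel. Since $M_{p}$ satisfies $(M.3)'$, the space $\mathcal{D}^{\beurou}(\reals^{n})$ is non-trivial, so I fix a non-negative $\phi\in\mathcal{D}^{\beurou}(\reals^{n})$ with $\supp\phi\subseteq B(0,1)$ and $\int_{\reals^{n}}\phi=1$. For $\delta>0$ write $\phi_{\delta}(\xi)=\delta^{-n}\phi(\xi/\delta)$, so that $\supp\phi_{\delta}\subseteq B(0,\delta)$ and $\int\phi_{\delta}=1$. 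I then set, for each $\varepsilon>0$,
\[ \eta_{\varepsilon}:=\mathbf{1}_{\Gamma^{3\varepsilon/2}}*\phi_{\varepsilon/2}, \]
which is non-negative, smooth, and satisfies $0\leq\eta_{\varepsilon}\leq1$ because it is an average of the $\{0,1\}$-valued indicator against the probability kernel $\phi_{\varepsilon/2}$.

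The support condition (a) follows from elementary manipulations of Minkowski sums of balls. If $\xi\in\Gamma^{\varepsilon}$, then $B(\xi,\varepsilon/2)\subseteq\Gamma+B(0,3\varepsilon/2)=\Gamma^{3\varepsilon/2}$, so on the whole set where $\zeta\mapsto\phi_{\varepsilon/2}(\xi-\zeta)$ is supported the indicator equals $1$, giving $\eta_{\varepsilon}(\xi)=\int\phi_{\varepsilon/2}=1$; hence $\eta_{\varepsilon}\equiv1$ on $\Gamma^{\varepsilon}$. If $\xi\notin\Gamma^{2\varepsilon}$, then $d(\xi,\Gamma)\geq2\varepsilon$, so every $\zeta$ with $|\xi-\zeta|<\varepsilon/2$ satisfies $d(\zeta,\Gamma)>3\varepsilon/2$, i.e.\ $\zeta\notin\Gamma^{3\varepsilon/2}$; thus the integrand vanishes identically and $\eta_{\varepsilon}(\xi)=0$.

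For the derivative bounds (b), I transfer all derivatives onto the smooth kernel. Since $\mathbf{1}_{\Gamma^{3\varepsilon/2}}\in L^{\infty}(\reals^{n})$ and $\phi_{\varepsilon/2}\in\mathcal{D}^{\beurou}(\reals^{n})$, one has $\eta_{\varepsilon}^{(\alpha)}=\mathbf{1}_{\Gamma^{3\varepsilon/2}}*\phi_{\varepsilon/2}^{(\alpha)}$, whence
\[ \bigl|\eta_{\varepsilon}^{(\alpha)}(\xi)\bigr|\leq\norm{\phi_{\varepsilon/2}^{(\alpha)}}_{L^{1}}=\left(\frac{\varepsilon}{2}\right)^{-|\alpha|}\norm{\phi^{(\alpha)}}_{L^{1}},\qquad\xi\in\reals^{n}. \]
Because $\supp\phi\subseteq B(0,1)$, the quantity $\norm{\phi^{(\alpha)}}_{L^{1}}$ is dominated by $|B(0,1)|\sup_{\reals^{n}}|\phi^{(\alpha)}|$, so the $L^{\infty}$-type estimates defining membership of $\phi$ in $\mathcal{D}^{\beurou}$ are available. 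Now fix any target $(\ell_{p})\in\mathfrak{R}^{\beurou}$. The positively dilated sequence $\bigl((\varepsilon/2)\ell_{p}\bigr)$ again lies in $\mathfrak{R}^{\beurou}$, so applying the defining estimate of $\phi$ to this sequence produces a constant $C$ with $\norm{\phi^{(\alpha)}}_{L^{1}}\leq C\prod_{j=1}^{|\alpha|}\bigl((\varepsilon/2)\ell_{j}\bigr)M_{\alpha}=C\left(\frac{\varepsilon}{2}\right)^{|\alpha|}L_{\alpha}M_{\alpha}$. Substituting this into the previous display cancels the factor $(\varepsilon/2)^{-|\alpha|}$ and yields exactly \eqref{eq:Mp-Gamma-func-bound} with $H_{\ell_{p},\varepsilon}=C$.

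The construction of $\phi$ and the two support inclusions are routine, so I expect the only point requiring genuine care to be the final absorption step: verifying that $\mathfrak{R}^{\beurou}$ is stable under positive dilations and that the projective (all-sequence) description of $\mathcal{D}^{\beurou}$ may be invoked for the dilated sequence. In the Beurling case this is immediate, since $\mathfrak{R}^{(M_{p})}$ consists of all constant sequences; in the Roumieu case it rests on the fact that scaling a sequence increasing to $\infty$ by the positive constant $\varepsilon/2$ again produces a sequence in $\mathfrak{R}^{\{M_{p}\}}$. The non-triviality of $\mathcal{D}^{\beurou}$ needed at the outset is guaranteed by $(M.3)'$, as already recorded in Section \ref{preliminaries section}.
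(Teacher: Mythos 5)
Your construction is exactly the paper's: the paper takes a non-negative $\varphi\in\mathcal{D}^{\beurou}(\reals^{n})$ with $\supp\varphi\subset B(0,1/2)$, sets $\varphi_{\varepsilon}(\xi)=\varepsilon^{-n}\varphi(\xi/\varepsilon)$, and defines $\eta_{\varepsilon}=\varphi_{\varepsilon}*\chi_{\Gamma^{3\varepsilon/2}}$, which coincides with your $\mathbf{1}_{\Gamma^{3\varepsilon/2}}*\phi_{\varepsilon/2}$ up to the cosmetic choice of where the factor $1/2$ sits. The paper leaves the verification as "easily verified"; your support inclusions and the absorption of $(\varepsilon/2)^{-|\alpha|}$ via the dilated sequence $((\varepsilon/2)\ell_{p})\in\mathfrak{R}^{\beurou}$ correctly supply those details.
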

	
	\begin{proof} 	
The existence of such functions is guaranteed by non-quasianalyticity. 
		Take any non-negative $\varphi \in \udspacereals{\beurou}{n}$ such that $\supp \varphi \subset B(0, 1/2)$ and $\int_{\mathbb{R}^{n}} \varphi(\xi)d\xi = 1$. Set $\varphi_{\varepsilon}(\xi) := \varepsilon^{-n} \varphi(\xi / \varepsilon)$ and let $\chi_{\Gamma^{3\varepsilon/2}}$ be the characteristic function of $\Gamma^{\frac{3}{2} \varepsilon}$. Taking $\eta_{\varepsilon} = \varphi_{\varepsilon} * \chi_{\Gamma^{3\varepsilon/2}}$, one easily verifies that $\{\eta_{\varepsilon}\}_{\varepsilon > 0}$ is a $\beurou$-$\Gamma$-mollifier.
	
	\end{proof}

	\begin{lemma}
		\label{l:expbound}
		Let $(a_{p}), (b_{p}) \in \mathfrak{R}^{*}$ and $\{\eta_{\varepsilon}\}_{\varepsilon > 0}$ be a $\beurou$-$\Gamma$-mollifier. Then there is $(\ell_{p}) \in \mathfrak{R}^{*}$ such that, for any $\varepsilon > 0$, we have
			\begin{equation*}
				\norm{\eta_{\varepsilon}(\xi) e^{i z \cdot \xi}}_{(a_{p}), (b_{p})} \leq H_{\ell_{p}, \varepsilon} \exp\left( 4 \varepsilon |\im z| + M_{\ell_{p}}(|z|) + N_{\ell_{p}}^{*}\left(\frac{1}{\Delta_{C}(\im z)}\right) \right), \qquad z \in T^{C}.
			\end{equation*}
			In particular, we have  $\eta_{\varepsilon}(\xi) e^{i z \cdot \xi} \in \sss^{M_p, (a_{p})}_{N_p, (b_{p})}(\mathbb{R}^{n})$ for all $z\in T^{C}$. 
				\end{lemma}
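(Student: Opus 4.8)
The plan is to apply the Leibniz rule to
$\partial^{\alpha}\bigl(\eta_{\varepsilon}(t)e^{iz\cdot t}\bigr)=\sum_{\gamma\leq\alpha}\binom{\alpha}{\gamma}(iz)^{\alpha-\gamma}\,\partial^{\gamma}\eta_{\varepsilon}(t)\,e^{iz\cdot t}$
and to produce the three exponential factors of the claimed bound separately. Writing $y=\im z$, I first fix, once and for all (independently of $\varepsilon$), a sequence $(\ell_{p})\in\mathfrak{R}^{\beurou}$ with $2\ell_{p}\leq a_{p}$ and $\ell_{p}\leq b_{p}$ for all $p$; this is possible in the Beurling case by choosing a sufficiently small constant and in the Roumieu case because $a_{p}/2,b_{p}\nearrow\infty$. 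The $\beurou$-$\Gamma$-mollifier bound \eqref{eq:Mp-Gamma-func-bound} holds for precisely this $(\ell_{p})$, giving $|\partial^{\gamma}\eta_{\varepsilon}(t)|\leq H_{\ell_{p},\varepsilon}L_{\gamma}M_{\gamma}$, and the same $(\ell_{p})$ will feed both associated functions $M_{\ell_{p}}$ and $N_{\ell_{p}}^{*}$ in the conclusion.

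Next I estimate $\sup_{t}|t^{\beta}|e^{-y\cdot t}$ restricted to $\supp\eta_{\varepsilon}\subseteq\Gamma^{2\varepsilon}$ (all Leibniz terms vanish off this set). For $t\in\Gamma^{2\varepsilon}$ I write $t=u+v$ with $u\in\Gamma$ and $|v|\leq2\varepsilon$; then \eqref{eq:dotestimate} gives $y\cdot t\geq\Delta_{C}(y)|u|-2\varepsilon|y|$, hence $e^{-y\cdot t}\leq e^{2\varepsilon|y|}e^{-\Delta_{C}(y)|u|}$, while $|t|\leq|u|+2\varepsilon$. Treating $s=|u|\geq0$ as free, the elementary maximum $\sup_{s\geq0}(s+2\varepsilon)^{|\beta|}e^{-\Delta_{C}(y)s}\leq e^{2\varepsilon\Delta_{C}(y)}|\beta|!/\Delta_{C}(y)^{|\beta|}$ together with $\Delta_{C}(y)\leq|y|$ (valid since $0\in\partial C$) produces
\[
\sup_{t\in\Gamma^{2\varepsilon}}|t^{\beta}|e^{-y\cdot t}\leq e^{4\varepsilon|y|}\frac{|\beta|!}{\Delta_{C}(y)^{|\beta|}},
\]
the two resulting factors $e^{2\varepsilon|y|}$ combining into the asserted $e^{4\varepsilon|\im z|}$. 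Finally, using $|\beta|!\,N_{|\beta|}^{*}=N_{\beta}$ and the defining inequality of the associated function of $N_{p}^{*}L_{p}$, I rewrite $|\beta|!/\Delta_{C}(y)^{|\beta|}\leq(L_{|\beta|}/N_{0}^{*})N_{\beta}\,e^{N_{\ell_{p}}^{*}(1/\Delta_{C}(y))}$.

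For the $z$-powers I invoke the associated function of $M_{p}L_{p}$ in the form $|z|^{|\alpha-\gamma|}\leq(M_{|\alpha-\gamma|}L_{|\alpha-\gamma|}/M_{0})e^{M_{\ell_{p}}(|z|)}$; the logarithmic convexity $(M.1)$ gives $M_{\gamma}M_{|\alpha-\gamma|}\leq M_{0}M_{\alpha}$ and the non-decrease of $(\ell_{p})$ gives $L_{\gamma}L_{|\alpha-\gamma|}\leq L_{\alpha}$, so that with $\sum_{\gamma\leq\alpha}\binom{\alpha}{\gamma}=2^{|\alpha|}$ one obtains
\[
\sum_{\gamma\leq\alpha}\binom{\alpha}{\gamma}|\partial^{\gamma}\eta_{\varepsilon}(t)|\,|z|^{|\alpha-\gamma|}\leq H_{\ell_{p},\varepsilon}\,2^{|\alpha|}L_{\alpha}M_{\alpha}\,e^{M_{\ell_{p}}(|z|)}.
\]
Multiplying the two estimates and dividing by $A_{\alpha}M_{\alpha}B_{\beta}N_{\beta}$ cancels $M_{\alpha}$ and $N_{\beta}$ and leaves the combinatorial weights $\prod_{j=1}^{|\alpha|}(2\ell_{j}/a_{j})$ and $\prod_{j=1}^{|\beta|}(\ell_{j}/b_{j})$, each $\leq1$ by the choice of $(\ell_{p})$; taking the supremum over $\alpha,\beta,t$ bounds \eqref{eq:norm} by $H_{\ell_{p},\varepsilon}$ (after absorbing the harmless constant $1/N_{0}^{*}$) times the claimed exponential, and finiteness for each fixed $z\in T^{C}$ yields the membership assertion since $N_{\ell_{p}}^{*}(1/\Delta_{C}(y))<\infty$.

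I expect the main difficulty to be purely organizational: keeping one sequence $(\ell_{p})$ consistent across the mollifier bound and both associated functions while routing the factors $2^{|\alpha|}$, the binomials, and the products $L_{\gamma}L_{|\alpha-\gamma|}$ and $L_{|\beta|}$ so that they are absorbed by $(a_{p})$ and $(b_{p})$ rather than spoiling the weight sequences. The only genuinely geometric step is the decomposition $t=u+v$ on $\Gamma^{2\varepsilon}$, which is precisely where the constant $4$ in $4\varepsilon|\im z|$ is forced.
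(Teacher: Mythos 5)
Your argument is correct and follows essentially the same route as the paper's proof: the same Leibniz expansion with $(\ell_{p})$ taken as (half of) $\min\{a_{p},b_{p}\}$ so that the factor $2^{|\alpha|}$ and the products $L_{\gamma}L_{|\alpha-\gamma|}$, $M_{\gamma}M_{|\alpha-\gamma|}$ are absorbed into $A_{\alpha}M_{\alpha}$, the same decomposition $\xi=u+v$ with \eqref{eq:dotestimate} producing the $e^{4\varepsilon|\im z|}$ factor, and the same maximization $\sup_{s}s^{|\beta|}e^{-\Delta_{C}(y)s}$ combined with $m^{m}\le e^{m}m!$ to pass to $N_{\ell_{p}}^{*}(1/\Delta_{C}(\im z))$. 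The only differences are organizational (you bound the $t$-part and $z$-part separately and keep the constants explicit), so nothing further is needed.
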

	
	\begin{proof} We only employ here the assumptions $(M.1)$ and $(M.3)'$ for the sequence $M_p$. 
		Set $\ell'_{p} := \min\{a_{p}, b_{p}\}$. Due to the support assumption on $\eta_{\varepsilon}$, we may assume below that $\xi \in \Gamma^{2 \varepsilon}$. Then for any $z \in T^{C}$, $\alpha, \beta \in \naturals^{n}$, we have
			\begin{align*}
				\frac{\left| \xi^{\beta} \frac{\displaystyle\partial^{\alpha}}{\displaystyle\partial \xi^{\alpha}} \left( \eta_{\varepsilon}(\xi) e^{iz \cdot \xi} \right) \right|}{A_{\alpha} M_{\alpha} B_{\beta} N_{\beta}}
				&
			 \leq \frac{|\xi|^{\beta} e^{- y \cdot \xi}}{L'_{\beta} N_{\beta}} 2^{-|\alpha|}\sum_{0 \leq \alpha' \leq \alpha} {\alpha \choose \alpha' }\frac{(2 |z|)^{|\alpha'|}}{L'_{\alpha'} M_{\alpha'}} \left( \frac{2^{|\alpha - \alpha'|}}{L'_{\alpha - \alpha'} M_{\alpha - \alpha'}} \left| \eta_{\varepsilon}^{(\alpha - \alpha')}(\xi) \right| \right) 
				\\
				&
			 \leq H_{\ell_{p}, \varepsilon} e^{M_{\ell_{p}}(|z|)} \frac{|\xi|^{\beta} e^{-y \cdot \xi}}{L_{\beta} N_{\beta}} ,
			\end{align*}
		where we have set $\ell_{p} := \ell'_{p} / 2$. Now $\xi = u + v$ for certain $u \in \Gamma$ and $v \in B(0, 2 \varepsilon)$, so that by the Cauchy-Schwarz inequality
			\begin{align*}
				\frac{|\xi|^{\beta} e^{-y \cdot \xi}}{L_{\beta} N_{\beta}} &\leq \frac{(|u| + 2 \varepsilon)^{\beta} e^{-y \cdot u} e^{-y \cdot v}}{L_{\beta} N_{\beta}} \leq \frac{(|u| + 2 \varepsilon)^{|\beta|} e^{-\Delta_{C}(y) |u|} e^{2 \varepsilon |y|}}{L_{\beta} N_{\beta}} \\
				&\leq \frac{\left(\frac{1}{\Delta_{C}(y)}\right)^{\beta} \left(\frac{|\beta|}{e}\right)^{\beta}}{L_{\beta} N_{\beta}} e^{2 \varepsilon \Delta_{C}(y) + 2 \varepsilon |y|} \leq \exp\left( N_{\ell_{p}}^{*}\left(\frac{1}{\Delta_{C}(y)}\right) + 4 \varepsilon |y| \right) , 
			\end{align*}
		where we have used \eqref{eq:dotestimate}			and the elementary inequality $m^m\leq e^m m!$.
	\end{proof}
	
In preparation for the proof of part (ii), we first need to extend \cite[Proposition 3.1]{dimovski2016} (cf. \cite[Lemma 2.7]{P-P-Vconv}) by relaxing assumptions on the weight sequences. This provides a useful convolution characterization of bounded sets in $\sss^{\prime \beurou}_{\dagger}(\reals^{n})$. Our approach to this convolution characterization employs the short-time Fourier transform (STFT) in the context of ultradistributions \cite{GrochenigZimmermann, D-V-indlimultra} and is inspired by the method from \cite{K-P-S-V2016}.
Given an ultradistribution and a window $\psi$ (a test function), the STFT of $f$ with respect to $\psi$ is given by the smooth function
$$
		V_{\psi} f (x, \xi) = \langle f(t), \overline{\psi(t - x)} e^{- 2 \pi i \xi \cdot t}\rangle , \qquad (x, \xi) \in \reals^{2n} . 
$$
	\begin{lemma}
		\label{p:boundednessconvolution}
		A subset  $B\subset\sss ^{\prime\beurou}_{\dagger}(\reals^{n})$ is bounded if and only if there exists $(\ell_{p}) \in \mathfrak{R}^{\beurou}$ such that
			\begin{equation}
				\label{eq:boundednessconvolution}
				\sup_{f \in B,\: x \in \reals^{n}} e^{-N_{\ell_{p}}(|x|)} \left| (f * \psi)(x) \right| < \infty  , \qquad \forall \psi \in \udspacereals{\beurou}{n} . 
			\end{equation}
	\end{lemma}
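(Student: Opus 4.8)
The plan is to reduce the statement to growth estimates for the short-time Fourier transform (STFT) and to handle the two implications separately, the substantial one being sufficiency. The bridge between the convolution and STFT formulations is the elementary modulation identity
\[
V_{\psi}f(x,\xi) = e^{-2\pi i \xi\cdot x}\,(f * \Phi_{\psi,\xi})(x), \qquad \Phi_{\psi,\xi}(s) := \overline{\psi(-s)}\, e^{-2\pi i\xi\cdot s},
\]
which exhibits the STFT, for each fixed $\xi$, as a convolution of $f$ with a modulation of the reflected, conjugated window. Taking $\xi=0$ identifies $(f*\psi)(x)$ with a value $V_{\phi}f(x,0)$ of an STFT, where $\phi$ ranges over all of $\udspacereals{\beurou}{n}$ as $\psi$ does; thus the hypothesis \eqref{eq:boundednessconvolution}, quantified over all windows, is precisely an STFT bound on the slice $\xi=0$.

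Necessity is routine. If $B$ is bounded it is equicontinuous (the predual $\sss^{\beurou}_{\dagger}(\reals^{n})$ is barrelled), so there are $(a_{p}),(b_{p})\in\mathfrak{R}^{\ast}$ and $C>0$ with $|\langle f,\varphi\rangle|\le C\norm{\varphi}_{(a_{p}),(b_{p})}$ for all $f\in B$. Writing $(f*\psi)(x)=\langle f(t),\psi(x-t)\rangle$ reduces matters to a translation estimate for $\norm{\psi(x-\cdot)}_{(a_{p}),(b_{p})}$: since $\psi$ is supported in some ball $B(0,R)$, the weight $t^{\beta}$ only sees $|t|\le|x|+R$, which produces the factor $e^{N_{b_{p}}(|x|+R)}$. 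Choosing $(\ell_{p})\in\mathfrak{R}^{\ast}$ with $\ell_{p}\le b_{p}/2$ gives $N_{q}L_{q}\le N_{q}B_{q}2^{-q}$ and hence $N_{b_{p}}(2t)\le N_{\ell_{p}}(t)$, so for $|x|\ge R$ the shift by $R$ is absorbed by the dilation, while small $|x|$ contributes only a constant; this yields $\norm{\psi(x-\cdot)}_{(a_{p}),(b_{p})}\le C_{\psi}\,e^{N_{\ell_{p}}(|x|)}$.

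For sufficiency I would first upgrade \eqref{eq:boundednessconvolution} to a \emph{two-sided} STFT growth estimate: the existence of $(\ell'_{p})\in\mathfrak{R}^{\ast}$ with $\sup_{f\in B}|V_{\psi}f(x,\xi)|\le C_{\psi}\exp\!\big(N_{\ell'_{p}}(|x|)+M_{\ell'_{p}}(|\xi|)\big)$ for each window $\psi$. Using the modulation identity, the key observation is that the family of normalized windows $\{e^{-M_{h_{p}}(|\xi|)}\Phi_{\psi,\xi}\}_{\xi\in\reals^{n}}$ is \emph{bounded} in $\udspacereals{\beurou}{n}$ for a suitable $(h_{p})$, since differentiating $e^{-2\pi i\xi\cdot s}$ costs only powers of $|\xi|$ that are reabsorbed by $e^{-M_{h_{p}}(|\xi|)}$, all these windows sharing the fixed compact support $-\supp\psi$. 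This is the step I expect to be the main obstacle: \eqref{eq:boundednessconvolution} controls each window individually and gives no a priori growth rate in $\xi$. The resolution is a Banach--Steinhaus argument. For the $(\ell_{p})$ furnished by the hypothesis, the functionals $\phi\mapsto e^{-N_{\ell_{p}}(|x|)}\langle f(t),\phi(x-t)\rangle$ on $\mathcal{D}^{\ast}_{K}$, indexed by $f\in B$ and $x\in\reals^{n}$, are continuous and pointwise bounded by \eqref{eq:boundednessconvolution}; since $\mathcal{D}^{\ast}_{K}$ is barrelled they are equicontinuous, hence uniformly bounded on the bounded family above. Evaluating on that family and multiplying back $e^{M_{h_{p}}(|\xi|)}$ produces the desired STFT bound (with $(\ell'_{p})$ the smaller of $(\ell_{p})$ and $(h_{p})$).

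Finally I would convert this growth estimate into equicontinuity of $B$ via the desingularization formula for the STFT: with $\psi\in\udspacereals{\beurou}{n}$ a fixed nonzero window,
\[
\langle f,\varphi\rangle = \norm{\psi}_{L^{2}}^{-2}\iint V_{\psi}f(x,\xi)\,\overline{V_{\psi}\varphi(x,\xi)}\,dx\,d\xi .
\]
Bounding $|V_{\psi}f|$ by the estimate just obtained (uniformly over $f\in B$) and $|V_{\psi}\varphi|$ by the rapid two-variable decay characterizing the Gelfand--Shilov test functions \cite{GrochenigZimmermann,D-V-indlimultra}, the integral converges and is dominated by a continuous seminorm of $\varphi$ times a constant independent of $f\in B$; here \eqref{eqequivalencegrowth1}--\eqref{eqequivalencegrowth2} are used to match the decay of $V_{\psi}\varphi$ against the growth $N_{\ell'_{p}}+M_{\ell'_{p}}$ with an integrable margin. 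This yields $|\langle f,\varphi\rangle|\le C\norm{\varphi}_{(a_{p}),(b_{p})}$ uniformly over $B$, i.e.\ equicontinuity, whence boundedness.
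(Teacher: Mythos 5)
Your proposal is correct and follows essentially the same route as the paper: both prove sufficiency by using the modulation identity to view the STFT as a convolution, applying Banach--Steinhaus on $\mathcal{D}^{\ast}_{K}$ to get equicontinuity and hence a uniform bound of $V_{\psi}f$ on the bounded family of normalized modulated windows, and then concluding via the STFT desingularization formula together with the sub-exponential decay of $V_{\overline{\psi}}\varphi$ for test functions. The only discrepancies are cosmetic (a sign in the modulation factor, and the paper stops at weak boundedness rather than a full seminorm estimate).
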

	
	\begin{proof} We only make use here of the assumptions $(M.1)$ and $(M.2)'$ on $N_p$.	The	necessity is easily obtained via the norms (\ref{eq:norm}). Hence suppose that (\ref{eq:boundednessconvolution}) holds for some $(\ell_{p}) \in \mathfrak{R}^{\beurou}$. We may assume the sequence $L_{p} N_{p}$ satisfies $(M.2)'$.	 
	We consider the weighted Banach space $X=\{g\in C(\mathbb{R}^{n}):\: g(\xi)=O(\exp(N_{\ell_{p}}(|\xi|)))\}$ and fix a compact set $K\subset \mathbb{R}^{n}$ with non-empty interior. 
	
The assumption (\ref{eq:boundednessconvolution})  implies that for each $f\in B$ the mapping $L_{f}: \varphi \mapsto f\ast \varphi $ is continuous from $\mathcal{D}^{\ast}(\mathbb{R}^{n})$ into $X$, so that in particular, in view of the Banach-Steinhaus theorem, $\tilde{B}=\{(L_f)_{|\mathcal{D}^{\ast}_{K}}: \: f\in B\}$ is an equicontinuous subset of $L_{b}(\mathcal{D}^{\ast}_{K}, X)$. This implies that there is $(h_p)\in\mathfrak{R}^{\ast}$ such that $\tilde{B}\subset L_{b}(\mathcal{D}^{M_p}_{K,(h_{p})}, X)$ and it is equicontinuous there, where $\mathcal{D}^{M_p}_{K,(h_{p})}=\{\psi\in \mathcal{D}_{K}:\: \sup_{x\in K,\: \alpha\in\mathbb{N}^{n}}|\psi^{(\alpha)}(x)|/(H_\alpha M_\alpha)<\infty \}$.
Fix $\psi\in \mathcal{D}^{(M_p)}_{K}$ with $\|\psi\|_{L^{2}(\mathbb{R}^{n})}=1$. Since $\{e^{-M_{h_{p}}(4 \pi |\xi|)} e^{2 \pi i \xi \cdot} \widecheck{\overline{\psi}} : \xi \in \reals^{n} \}$ is a bounded family in $\mathcal{D}^{M_p}_{K, (h_p)}$, we conclude that,  for some $C_{B} > 0$, independent of $f \in B$,
			\[ \left| V_{\psi} f(x, \xi) \right| = \left| e^{-2 \pi i \xi \cdot x} \left( f * (e^{2 \pi i \xi \cdot} \widecheck{\overline{\psi}}) \right)(x) \right| \leq C_{B} \exp\left(N_{\ell_{p}}(|x|) + M_{h_{p}}(4 \pi |\xi|)\right) . \]	
			
	On the other hand, let now $\varphi \in \sss_{\dagger}^{\beurou}(\reals^{n})$.	 For any $(\ell'_{p}) \in \mathfrak{R}^{\ast}$ it follows from \cite[Proposition 1]{D-V-Roumieuind} that  there is some $C_{\varphi} > 0$ such that
			\[ \left| V_{\overline{\psi}} \varphi(x, -\xi) \right| \leq C_{\varphi} \exp\left( - N_{\ell'_{p}}(|x|) - M_{\ell'_{p}}(|\xi|) \right) . \]
Moreover, according to the desingularization formula\footnote{This is stated in \cite{D-V-indlimultra} under the assumptions $(M.1)$ and $(M.2)$, but one can relax $(M.2)$ to $(M.2)'$ using the continuity result \cite[Proposition 1]{D-V-Roumieuind} and adapting the arguments given in \cite[Section 3]{K-P-S-V2016} or \cite{D-V-indlimultra}. } 	for the STFT \cite[Eq. (2.6)]{D-V-indlimultra},
$$
\ev{f}{\varphi} = \int \int_{\reals^{2n}} V_{\psi} f(x, \xi) V_{\overline{\psi}} \varphi(x, -\xi) dx d\xi.
$$
Let $h > 0$ be such that $\log h / \log H \geq n + 1$ (with $H$ the corresponding constant occurring in $(M.2)'$ for $L_p N_p$ and $H_pM_p$) and set $\ell'_{p} := h^{-1} \min(\ell_{p}, (4\pi )^{-1} h_{p})$, then applying (\ref{eq:M2prime}) one gets
			\begin{align*} 
			\sup_{f\in B}\left| \ev{f}{\varphi} \right| 
							       \leq C_{B} C_{\varphi} \int_{\reals^{n}} e^{M_{h_{p}}(4 \pi |\xi|) - M_{\ell'_{p}}(|\xi|)} d\xi \int_{\reals^{n}} e^{N_{\ell_{p}}(|x|) - N_{\ell'_{p}}(|x|)} dx < \infty, 
			\end{align*}
which concludes the proof of the sufficiency.
	\end{proof}

	We are now ready to present a proof of Theorem \ref{t:boundednessLaplace}.
	
	\begin{proof}[Proof of Theorem \ref{t:boundednessLaplace}]
		Suppose $B \subseteq \sss^{ \prime\beurou}_{\dagger}[\Gamma]$ is bounded in $\sss^{\prime\beurou}_{\dagger}(\reals^{n})$. By equicontinuity, there are certain $(a_{p}), (b_{p}) \in \mathfrak{R}^{\beurou}$ such that $B \subseteq \left( \sss^{M_p, (a_{p})}_{N_p, (b_{p})}(\reals^{n}) \right)^{\prime}$ and it is bounded there. Then, (\ref{eq:boundednessLaplace1}) follows directly from Lemma \ref{l:expbound} (in particular, one does not employ $(M.2)$ for $N_p$ in this implication).

		We now show that (\ref{eq:boundednessLaplace2}) is sufficient to guarantee boundedness. We are going to do this employing Lemma \ref{p:boundednessconvolution}. We may assume that $(\ell_p)$ is such that $L_p M_p$ satisfies $(M.2)'$ and $L_p N^{\ast}_p$ fulfills $(M.2)$ (the constants occurring in these conditions are denoted by $A$ and $H$ below). We may also suppose that $|\omega|=1$. Fix $\varphi \in \udspacereals{\beurou}{n}$. Find $R>0$ such that $\supp \varphi\subset B(0,R)$. We keep $f \in B$. Take a bounded function $\gamma : \reals^{n} \rightarrow (0,\sigma_0]$, which will be specified later. Inverting the Laplace transform of $f\ast \varphi$, 
			\[ (f * \varphi)(t) = \frac{1}{(2 \pi)^{n}} \int_{\reals^{n}+i \gamma(t)\omega} e^{-i z \cdot t} \laplace{f}{z} \laplace{\varphi}{z} dz . \]
		By \cite[Lemma~3.3, p.~49]{ultradistributions1} and \eqref{eqequivalencegrowth2}, we have that for any $(h_p)\in \mathfrak{R}^{\ast}$ 
			\[ \left| \laplace{\varphi}{x+i\gamma(t)\omega} \right| \leq L_{\varphi} \exp\left( - M_{h_p}\left(|x|\right) + R \gamma(t) \right), \qquad x\in \mathbb{R}^{n}. \]
Choose $h > 0$ such that $\log h \geq (n + 1) \log H$. Taking $h_p=\ell_{p}/h$, the condition $(M.2)'$ in the form of estimate (\ref{eq:M2prime}) yields
			\[ M_{\ell_p}(|x|) - M_{h_p}\left(|x|\right)= M_{\ell_p}(|x|) - M_{\ell_p}\left(h |x|\right)\leq  - (n + 1) \log(|x| / A)  , \]
		whence we infer the exponential function of this expression is integrable on $\mathbb{R}^{n}$. Let $d=\Delta_{C}(\omega)$. Employing (\ref{eq:boundednessLaplace2}) we then obtain
			\begin{align*}
				&\left| \int_{\reals^{n}+i \gamma(t)\omega} e^{-i z \cdot t} \laplace{f}{z} \laplace{\varphi}{z} dz \right|
				 \\
				&\leq L_{B} L_{\varphi} \exp\left( \gamma(t) (\omega\cdot t) + N_{\ell_{p}}^{*}\left(\frac{1}{d\gamma(t)}\right) + R\gamma(t)\right) \int_{\reals^{n}} e^{M_{\ell_p}(|x|) - M_{h_p}\left(|x|\right)} dx \\
				&\leq L \exp\left( N^{*}_{\ell_p}\left(\frac{1}{d\gamma(t)}\right) + |t|\gamma(t)\right) ,
			\end{align*}
		for some $L > 0$. Note that $L_p N_p$ satisfies $(M.1)^{\ast}$, so that (\ref{eq:M*bound}) holds for it. Also, since $N_{\ell_p}(t)=o(t)$, there is a sufficiently large $r_0$ such that 
		$$
		\frac{4(n_{1} \ell_1+ 1) N_{\ell_p}(|t|)}{d |t|}\leq \sigma_0 \qquad \mbox{for }|t|>r_{0}.
		$$
		Set $r=\max\{r_0, n_1\ell_1+1\}$,
		 we then define
			\[ \gamma(t) = \begin{dcases} \sigma_0, & |t| < r , \\ \frac{4(n_{1} \ell_1+ 1) N_{\ell_p}(|t|)}{d |t|} , &  |t| \geq r . \end{dcases} \]
		For $|t| < r$ obviously 
			\[  \exp\left( N^{*}_{\ell_p}\left(\frac{1}{d\gamma(t)}\right) + |t|\gamma(t)\right)\leq \exp\left( r\sigma_0  + N^{*}_{\ell_p}\left(\frac{1}{\sigma_0d}\right) \right) . \] 
		If $|t| \geq r$, the inequality (\ref{eq:M*bound}) yields
			\begin{align*}
				&\exp\left( N^{*}_{\ell_p}\left(\frac{1}{d\gamma(t)}\right) + |t|\gamma(t)\right) \\
				&\leq \exp\left( N^{*}_{\ell_p}\left(\frac{|t|}{4(n_{1}\ell_1 + 1) N_{\ell_p}( |t|)}\right) +\frac{4(n_{1}\ell_1 + 1)}{d} N_{\ell_p}(|t|) \right) \\
				&\leq \exp\left( 2^{k} N_{\ell_p}( |t|) + A' \right)
			\end{align*}
		for some $A' > 0$ and  $k= \lceil \log_{2} (1+4(n_{1}\ell_1 + 1)/d) \rceil$. By repeated application of (\ref{eq:M2}) for $N_{\ell_p}$, one obtains
			\[ \exp\left( 2^{k} N_{\ell_p}(|t|)\right) \leq \exp(N_{\ell_p}\left( H^{k} |t| \right) + A'') , \]
		for some $A''' > 0$. Let $a_{p} = \ell_pH^{-pk}$. Summing up, we have shown that 
\[		\sup_{f \in B,\: t \in \reals^{n}} e^{-N_{a_{p}}(|t|)} \left| (f * \varphi)(t) \right| < \infty .\]
Since $\varphi$ was arbitrary, Lemma \ref{p:boundednessconvolution} applies to conclude that $B$ is bounded.

	\end{proof}

\section{The Tauberian theorem}
\label{Section: Tauberian theorem}

We shall now use our results from the previous section to generalize the Drozhzhinov-Vladimirov-Zav'yalov multidimensional Tauberian theorem for Laplace transforms \cite{vladimirovbook,vladimirov1988tauberian} from distributions to ultradistributions.
Our goal is to devise a Laplace transform criterion for the so-called quasiasymptotics. 

The quasiasymptotic behavior was originally introduced by Zav'yalov \cite{Zavyalov} for distributions, but the definition of this concept naturally extends to ultradistributions or other duals \cite{structquasiultra, 10.2307/44095755,stevan2011asymptotic} as follows. Assume that $\mathcal{X}$ is a (barreled) locally convex space of test functions on $\mathbb{R}^{n}$ provided with a continuous action of dilations.  A generalized function $f \in \mathcal{X}'$ is said to have \textit{quasiasymptotic behavior} (at infinity) with respect to a (measurable) function $\rho : \reals_{+} \rightarrow \reals_{+}$ if there is $g \in \mathcal{X}'$ such that
	\begin{equation}
		\label{eq:quasiasymptotic}
		f(\lambda \xi) \sim \rho(\lambda) g(\xi) , \quad \text{ as } \lambda \rightarrow \infty
	\end{equation}
in $\mathcal{X}'$, that is, if for each test function $\varphi\in\mathcal{X}$
$$
\langle f(\lambda\xi),\varphi(\xi)\rangle=\lambda^{-n}\langle f(\lambda\xi),\varphi(\xi/\lambda)\rangle\sim \rho(\lambda)\langle g(\xi),\varphi(\xi) \rangle.
$$
The generalized function $g$ must be homogenous of some degree $\alpha\in\mathbb{R}$ and, if $g\neq0$, the function $\rho$ must be regularly varying \cite{bingham1989regular} of index $\alpha\in\mathbb{R}$, namely,
$$
\lim_{\lambda\to\infty} \frac{\rho(\lambda a)}{\rho(\lambda)}=a^{\alpha}, \qquad \mbox{for each } a>0.
$$
Note that since we are only interested in its terminal behavior, one may assume \cite{bingham1989regular} without any loss of generality that the regularly varying function $\rho$ is continuous on $[0,\infty)$. We are exclusively interested in the case $\mathcal{X}'= \mathcal{S}^{\prime \ast}_{\dagger}(\mathbb{R}^{n})$.

We call a cone $C'$ \emph{solid} if it is non-empty and $\operatorname*{int} C'\neq \emptyset$.

After this preliminaries, we are ready to present our Tauberian theorem. It is inverse to the ensuing Abelian statement that readily follows from the definition: If an ultradistribution $f\in\mathcal{S}'^{\ast}_{\dagger}[\Gamma]$ has quasiasymptotic behavior \eqref{eq:quasiasymptotic} in $\mathcal{S}'^{\ast}_{\dagger}(\mathbb{R}^{n})$, then 
\begin{equation}
			 \label{eq:generalTauberianTheoremquasi-LaplaceconvStrong}
			\lim_{r\to0^{+}} \frac{r^n}{\rho(1/r)}\laplace{f}{rz}=\laplace{g}{z} 
			\end{equation}
uniformly for $z$ in compact subsets of $T^{C}$. In the next theorem we write  $\sharp=(A_p),\:\{A_p\}$ for the Beurling and Roumieu cases of the specified weight sequence $A_p$. 	\begin{theorem}
		\label{t:quasiasympequiv} Assume that $M_p$ and $N_p$ both satisfy $(M.1)$ and $(M.2)$, while $M_p$ also satisfies $(M.3)'$ and $N_p$ satisfies $(M.1)^{\ast}$. Set $A_p=M_pN_p$.
		Let $f \in \mathcal{S}^{\prime\beurou}_{\sharp}[\Gamma]$ and let $\rho$ be regularly varying of index $\alpha$. 
Suppose that there is a non-empty solid subcone $C'\subset C$
such that for each $y\in C'$ the limit	 
\begin{equation}						\label{eq:generalTauberianTheoremquasi-Laplaceconv}
						\lim_{r \rightarrow 0^{+}} \frac{r^n}{\rho(1/r)}\laplace{f}{riy}
						\end{equation}	
exists. If there are $\omega\in C$ and $(\ell_{p})\in\mathfrak{R}^{\beurou}$  such that
\begin{equation}
				\label{eq:generalTauberianTheoremquasi-Bound}
			\limsup_{r\to0^{+}}\underset{\theta\in (0,\pi/2]}{\sup_{|x|^{2}+\sin^{2}\theta=1}}	\frac{r^n
			e^{- A_{\ell_{p}}^{*}\left(\frac{1}{\sin\theta} \right)}}{\rho(1/r)} \left| \laplace{f}{r(x+i\sin\theta \omega)} \right| <\infty,
			\end{equation}	
then $f$ has quasiasymptotic behavior with respect to $\rho$ in $\mathcal{S}^{\prime\beurou}_{\dagger}(\mathbb{R}^{n})$.

	\end{theorem}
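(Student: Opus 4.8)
The plan is to dilate and reduce everything to the already-established Corollary \ref{t:generalTauberianTheorem}. Put $g_{\lambda}:=\rho(\lambda)^{-1}f(\lambda\,\cdot\,)$, so that the sought quasiasymptotic behavior \eqref{eq:quasiasymptotic} is exactly the assertion $g_{\lambda}\to g$ in $\mathcal{S}^{\prime\beurou}_{\dagger}(\mathbb{R}^{n})$ as $\lambda\to\infty$. The change of variables $\xi\mapsto\lambda\xi$ yields the scaling identity $\laplace{g_{1/r}}{z}=r^{n}\rho(1/r)^{-1}\laplace{f}{rz}$, under which \eqref{eq:generalTauberianTheoremquasi-Laplaceconv} asserts that $\lim_{\lambda\to\infty}\laplace{g_{\lambda}}{iy}$ exists for all $y\in\interior C'$, a nonempty open cone. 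Fixing any sequence $\lambda_{k}\to\infty$ and setting $f_{k}=g_{\lambda_{k}}$, I would verify the hypotheses of Corollary \ref{t:generalTauberianTheorem} with $\Omega=\interior C'$; the Laplace limits \eqref{eq:generalTauberianTheorem-Laplaceconv} are then given, so the whole burden is the uniform bound \eqref{eq:generalTauberianTheorem-Bound}. Granting it, the Corollary produces a limit along each sequence, all such limits agree since each equals the common value $\lim_{\lambda\to\infty}\laplace{g_{\lambda}}{iy}$ on $\interior C'$ (injectivity and analyticity of the Laplace transform), and the elementary subsequence criterion upgrades this to $\lim_{\lambda\to\infty}g_{\lambda}=g$.

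Everything thus hinges on producing, uniformly in $\lambda\geq\lambda_{0}$, an estimate of the form \eqref{eq:boundednessLaplace2} for $\laplace{g_{\lambda}}{x+i\sigma\omega}$ out of the sphere-normalised hypothesis \eqref{eq:generalTauberianTheoremquasi-Bound}. Taking $|\omega|=1$, I write $x+i\sigma\omega=s(x'+i\sin\theta\,\omega)$ with $s=\sqrt{|x|^{2}+\sigma^{2}}$, $x'=x/s$ and $\sin\theta=\sigma/s$, so that $(x',\theta)$ lies on the sphere in \eqref{eq:generalTauberianTheoremquasi-Bound}, $1/\sin\theta=s/\sigma$, and the scaling identity gives $\laplace{g_{\lambda}}{x+i\sigma\omega}=r^{n}\rho(1/r)^{-1}\laplace{f}{rs(x'+i\sin\theta\,\omega)}$ with $r=1/\lambda$. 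The argument bifurcates according to whether the scaled radius $rs=s/\lambda$ lies below the threshold $r'_{0}$ at which the $\limsup$ in \eqref{eq:generalTauberianTheoremquasi-Bound} becomes controlled by some constant $K$.

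In the inner regime $s\leq r'_{0}\lambda$, applying \eqref{eq:generalTauberianTheoremquasi-Bound} at scale $r'=rs$ and reinstating the prefactors gives
\[
\left|\laplace{g_{\lambda}}{x+i\sigma\omega}\right|\leq (K+1)\,s^{-n}\,\frac{\rho(\lambda/s)}{\rho(\lambda)}\,\exp\!\left(A^{*}_{\ell_{p}}\!\left(\tfrac{s}{\sigma}\right)\right).
\]
Here both arguments of $\rho$ are bounded below, so the regularly varying factor $s^{-n}\rho(\lambda/s)/\rho(\lambda)$ is dominated, uniformly in $\lambda$, by a power of $s$ via Potter's bounds, which—using $\sigma\leq s\leq|x|+\sigma_{0}$—is polynomial in $|x|$ and in $1/\sigma$. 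Crucially, $A_{p}=M_{p}N_{p}$ makes $A^{*}_{p}=M_{p}N^{*}_{p}$, and writing $L_{p}=L'_{p}L''_{p}$ yields the infimal-convolution estimate $A^{*}_{\ell_{p}}(s/\sigma)\leq M_{\ell'_{p}}(s)+N^{*}_{\ell''_{p}}(1/\sigma)$; after trading $s$ for $|x|$ up to a constant factor via \eqref{eq:M2prime} and absorbing the polynomial factors into the exponentials, this has the form $L\exp(M_{\ell'_{p}}(|x|)+N^{*}_{\ell''_{p}}(1/\sigma))$. In the outer regime $s>r'_{0}\lambda$, which forces $|x|$ to be of size $\gtrsim\lambda$, I instead invoke the a priori bound of Theorem \ref{t:boundednessLaplace}(i) for the singleton $\{f\}\subset\mathcal{S}^{\prime\beurou}_{\sharp}[\Gamma]$ at $w=r(x+i\sigma\omega)$; using $\lambda<s/r'_{0}\lesssim|x|$ I convert the term $M_{\tilde\ell_{p}}(|w|)=M_{\tilde\ell_{p}}(s/\lambda)$ into a bound $\lesssim M(|x|)$, the term $A^{*}_{\tilde\ell_{p}}(\lambda/(\sigma d))$ (with $d=\Delta_{C}(\omega)$) into $A^{*}_{\tilde\ell_{p}}(c|x|/\sigma)$ which splits as before, and the prefactor $\lambda^{-n}/\rho(\lambda)$ into a power of $|x|$ by Potter's bounds; all are absorbed into $\exp(M_{\ell'_{p}}(|x|)+N^{*}_{\ell''_{p}}(1/\sigma))$. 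A common refinement of the two index sequences then delivers \eqref{eq:boundednessLaplace2} uniformly in $\lambda\geq\lambda_{0}$.

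I expect the principal difficulty to be precisely this two-regime bookkeeping: transferring the product weight $A^{*}$ of the hypothesis onto the separated weights $M$ and $N^{*}$ of \eqref{eq:boundednessLaplace2} while controlling the regularly varying factors uniformly in the dilation parameter. Conceptually, the dilation trades the surplus weight $M_{p}$ concealed in $A_{p}=M_{p}N_{p}$ for decay in the spatial variable $|x|$—which is exactly why $f$ is assumed in the smaller space $\mathcal{S}^{\prime\beurou}_{\sharp}[\Gamma]$ whereas the conclusion only lives in $\mathcal{S}^{\prime\beurou}_{\dagger}(\mathbb{R}^{n})$—and the uniform handling of $\rho$ rests on Potter's bounds together with the reduction to continuous $\rho$ noted before the theorem.
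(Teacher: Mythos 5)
Your proposal is correct and mirrors the paper's own argument essentially step for step: reduce to Corollary \ref{t:generalTauberianTheorem} via the dilation/scaling identity, establish the uniform bound \eqref{eq:boundednessLaplace2} by splitting at the threshold $r\sqrt{|x|^{2}+\sigma^{2}}\lessgtr r_{0}$, use the hypothesis \eqref{eq:generalTauberianTheoremquasi-Bound} in the inner regime and Theorem \ref{t:boundednessLaplace}(i) for the singleton $\{f\}\subset\mathcal{S}^{\prime\beurou}_{\sharp}[\Gamma]$ in the outer one, control $\rho$ by Potter's bounds, and finally split $A^{*}_{\ell_{p}}$ into $M_{\ell_{p}}+N^{*}_{\ell_{p}}$ using $A_{p}^{*}=M_{p}N_{p}^{*}$. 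The only differences are presentational (explicit dilates $g_{\lambda}$ and the sequence reduction, which the paper leaves implicit).
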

	
	\begin{proof} 
In view of Corollary \ref{t:generalTauberianTheorem}, it suffices to show that the Laplace transform of $f$ satisfies a bound of the form
	\begin{equation}\label{eq1tauultraLaplace}
\frac{r^{n}}{\rho(1/r)}\left|\laplace{f}{r(x+i\sigma \omega)}\right|\leq L \exp\left(M_{\ell'_{p}}(|x|)+N_{\ell'_{p}}\left(\frac{1}{\sigma}\right)\right)
	\end{equation}
for some $(\ell'_{p})\in\mathfrak{R}^{\ast},$ $L,\sigma_0>0$ and all $x\in\mathbb{R}^{n}$ and $0<\sigma<\sigma_0$. We may assume $(M.2)$ holds for both  $L_pM_p$ and $L_pN^{\ast}_p$ (with constants $A$ and $H$). We can also assume that $L_p\geq 1$ for all $p$. Using \eqref{eq:generalTauberianTheoremquasi-Bound}, there are $0<r_0<1$ and $L_1$ such that for any $0<r<r_0$
\begin{equation}\label{eq2tauultraLaplace}
\frac{r^n}{\rho(1/r)} \left| \laplace{f}{r(x+i\sin\theta \omega)} \right| \leq L_1 \exp\left( A_{\ell_{p}}^{*}\left(\frac{1}{\sin\theta} \right)\right),
\end{equation}	
whenever $|x|^{2}+\sin^{2} \theta=1$, where we always keep $0<\theta<\pi/2$. On the other hand, applying Theorem \ref{t:boundednessLaplace} to the singleton $B=\{f\}$ and possibly enlarging $(\ell_p)$,
\begin{equation}\label{eq3tauultraLaplace}
\left| \laplace{f}{r(x+i\sigma \omega)} \right| \leq L_2 \exp\left( M_{\ell_p}(|x|) + A_{\ell_{p}}^{*}\left(\frac{1}{r\sigma} \right)\right),
\end{equation}
for any $0<r<1$, $x\in\mathbb{R}^{n}$ and $\sigma<r_0<1$. We may assume that $\rho(\lambda)=1$ for $\lambda<r_0$. Furthermore, Potter's estimate \cite[Theorem 1.5.4]{bingham1989regular} yields
\begin{equation}\label{eq4tauultraLaplace} 
		\frac{\rho(\lambda t)}{\rho(\lambda)} \leq L_{3} t^{\alpha} \max\{ t^{- 1}, t \} , \qquad t,\lambda>0.
	\end{equation}	
We keep arbitrary $r<1$, $x\in\mathbb{R}^{n}$, $0<\sigma<r_0$, and write $r'=\sqrt{|x|^{2}+\sigma^{2}}$, $x'=x/r'$, and $\sin \theta=\sigma/r'$. If $rr'<r_0$, we obtain from \eqref{eq4tauultraLaplace}, \eqref{eq2tauultraLaplace}, and the fact that $M_{\ell_p}(t)$ increases faster than $\log t$,
\begin{align*}
\frac{r^{n}}{\rho(1/r)}\left|\laplace{f}{r(x+i\sigma \omega)}\right|&\leq L_1 L_3 \left(\frac{1}{r'}\right)^{\alpha+n} \max\left\{r',\frac{1}{r'}\right\}\exp\left(A^{\ast}_{\ell_p}\left(\frac{r'}{\sigma}\right)\right)
\\
&= O\left(\exp\left(M_{\ell_p}(2|x|)+A^{\ast}_{\ell_p}\left(\frac{2|x|}{\sigma}\right)\right) \right).
\end{align*}
Similarly, if $rr'\geq r_0$, we employ \eqref{eq3tauultraLaplace}, \eqref{eq4tauultraLaplace}, $\rho(1/(rr'))=1$, and $(M.2)'$ for both $L_pM_p$ and $L_pA_p$ to conclude that for some $h'>0$
$$
\frac{r^{n}}{\rho(1/r)}\left|\laplace{f}{r(x+i\sigma \omega)}\right|\leq O\left(\exp\left(M_{\ell_p}(h'|x|)+A^{\ast}_{\ell_p}\left(\frac{h'|x|}{\sigma}\right)\right) \right).
$$
We have found in all cases
$$
\frac{r^{n}}{\rho(1/r)}\left|\laplace{f}{r(x+i\sigma \omega)}\right|\leq L_4\exp\left(M_{\ell_p}(h|x|)+A^{\ast}_{\ell_p}\left(\frac{h|x|}{\sigma}\right)\right) .
$$
for some $L_{4}$ and $h=\max\{h',2\}$. It remains to observe that 
$A^{\ast}_{\ell_p}\left(h|x|/\sigma\right)\leq M_{\ell_p}(h|x|)+ N^{\ast}_{\ell_p} (h/\sigma)$, so that   \eqref{eq1tauultraLaplace} holds with $\ell'_p=\ell_p/(Hh),$ by $(M.2)$.	
	\end{proof}

\section{Sharpening the bound \eqref{eq:boundednessLaplace1}} 
\label{Section Laplace isomorphism}
If the sequence $M_p$ and the cone $\Gamma$ satisfy stronger conditions, it turns out that the bound \eqref{eq:boundednessLaplace1} can be considerably improved. In fact, we shall show here how to remove the $\varepsilon$ term from \eqref{eq:boundednessLaplace1}.  
Recall $\Gamma$ is a solid cone if $\interior \Gamma \neq \emptyset$.

We start with three  lemmas, from which our improvement of Theorem \ref{t:boundednessLaplace} will follow.

	\begin{lemma}
		\label{l:holomorphicaslaplace}
		Let $\{F_{j}\}_{j \in I}$ be a family of holomorphic functions on $T^{C}$. Suppose that for some $(\ell_{p}) \in \mathfrak{R}^{\beurou}$ and each $\varepsilon > 0$ there is $L = L_{\varepsilon} > 0$ such that for all $j \in I$
			\begin{equation}
				\label{eq:holomorphicaslaplace}
				\left| (1 + |\re z|)^{n + 2} F_{j}(z) \right| \leq L \exp\left( \varepsilon |\im z| + N_{\ell_{p}}^{\beurou}\left(\frac{1}{\Delta_{C}(\im z)}\right) \right), \qquad z\in T^C.
			\end{equation}
		Then there are $(h_{p}) \in \mathfrak{R}^{\ast}$ and $f_{j} \in C^{1}(\reals^{n})$ with $\supp f_{j} \subseteq \Gamma$, $\forall j \in I$, such that $\{ e^{-N_{h_{p}}(|\cdot|)} f_{j} \}_{j \in I}$ is a bounded set in $L^{\infty}(\reals^{n})$ and $F_{j}(z)  = \laplace{f_{j}}{z}$, $j \in I$.
	\end{lemma}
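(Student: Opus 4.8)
The plan is to invert the Laplace transform slice by slice via the Fourier inversion formula and to read off every required property of the resulting functions from the single bound \eqref{eq:holomorphicaslaplace}. Fix for the moment $y\in C$. Since $|F_{j}(x+iy)|\le L_{\varepsilon}(1+|x|)^{-(n+2)}\exp(\varepsilon|y|+N^{*}_{\ell_p}(1/\Delta_{C}(y)))$, the map $x\mapsto F_{j}(x+iy)$ is integrable, and so is $x\mapsto|x|\,F_{j}(x+iy)$, uniformly in $j$. I would therefore define
\[
 f_{j}(\xi) := \frac{e^{y\cdot\xi}}{(2\pi)^{n}}\int_{\mathbb{R}^{n}} F_{j}(x+iy)\,e^{-ix\cdot\xi}\,dx ,
\]
which converges absolutely and, thanks to the extra power of decay, may be differentiated once under the integral sign, so $f_{j}\in C^{1}(\mathbb{R}^{n})$. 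Equivalently, $g_{y}:=f_{j}e^{-y\cdot}$ is the inverse Fourier transform of $F_{j}(\cdot+iy)$.

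First I would check that $f_{j}$ is independent of the slice $y\in C$. As $C$ is convex, hence connected, it suffices to show $\partial_{y_{k}}f_{j}=0$; using the Cauchy--Riemann equations for the holomorphic $F_{j}$ one verifies the pointwise identity $\partial_{y_{k}}\!\big(F_{j}(x+iy)e^{-i(x+iy)\cdot\xi}\big)=i\,\partial_{x_{k}}\!\big(F_{j}(x+iy)e^{-i(x+iy)\cdot\xi}\big)$, and the $x$-integral of the right-hand side vanishes by the polynomial decay in $\re z$. Next comes the support statement, which is where acuteness enters. If $\xi_{0}\notin\Gamma=\Gamma^{**}$, there is $y_{0}\in\Gamma^{*}$ with $y_{0}\cdot\xi_{0}<0$, and since $C=\interior\Gamma^{*}$ is dense in $\Gamma^{*}$ we may take $y_{0}\in C$. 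Evaluating the (slice-independent) $f_{j}$ at $\xi_{0}$ along the ray $y=ty_{0}$, $t>0$, and using $\Delta_{C}(ty_{0})=t\Delta_{C}(y_{0})$ with \eqref{eq:holomorphicaslaplace}, gives $|f_{j}(\xi_{0})|\le C_{\varepsilon}\exp\big(t(\varepsilon|y_{0}|-c)\big)$ with $c=-y_{0}\cdot\xi_{0}>0$. Choosing $\varepsilon<c/|y_{0}|$ and letting $t\to\infty$ forces $f_{j}(\xi_{0})=0$, uniformly in $j$, so $\supp f_{j}\subseteq\Gamma$.

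The main obstacle is the growth estimate, which requires turning the $N^{*}$-type exponent in \eqref{eq:holomorphicaslaplace} into an $N$-type bound in $|\xi|$. I would fix a direction $\omega\in C$ with $|\omega|=1$, write $d=\Delta_{C}(\omega)$, and evaluate $f_{j}$ along $y=s\omega$, obtaining for $\xi\in\Gamma$
\[
 |f_{j}(\xi)|\le C_{\varepsilon}\exp\Big(s\,(\omega\cdot\xi)+\varepsilon s+N^{*}_{\ell_p}\big(\tfrac{1}{sd}\big)\Big).
\]
Since $d|\xi|\le\omega\cdot\xi\le|\xi|$ by \eqref{eq:dotestimate}, it remains to bound $\inf_{s>0}\big(s(\omega\cdot\xi)+N^{*}_{\ell_p}(1/(sd))\big)$ by an expression of the form $N_{h_p}(c|\xi|)+O(1)$. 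Here I would assume, as permitted by the remarks in Section \ref{preliminaries section}, that $L_pN_p$ satisfies $(M.2)$ and $(M.1)^{\ast}$, and apply \eqref{eq:M*bound} for $L_pN_p$ with the near-optimal choice $s\sim N_{\ell_p}(\omega\cdot\xi)/\big(d\,(\omega\cdot\xi)\big)$; the term $\varepsilon s$ stays bounded because $N_{\ell_p}(t)=o(t)$, and repeated application of $(M.2)$ in the form \eqref{eq:M2} absorbs the resulting multiplicative constant in front of $N_{\ell_p}$ into a rescaling $h_p=\ell_pH^{-k}$. This yields $|f_{j}(\xi)|\le C\,e^{N_{h_p}(|\xi|)}$ uniformly in $j$, i.e. $\{e^{-N_{h_p}(|\cdot|)}f_{j}\}_{j\in I}$ is bounded in $L^{\infty}(\mathbb{R}^{n})$.

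Finally I would verify $F_{j}=\laplace{f_{j}}{\cdot}$. Because $N_{h_p}(t)=o(t)$ and $e^{-y\cdot\xi}\le e^{-\Delta_{C}(y)|\xi|}$ on $\Gamma$, the growth bound shows $g_{y}=f_{j}e^{-y\cdot}\in L^{1}(\mathbb{R}^{n})$ for every $y\in C$; since also $F_{j}(\cdot+iy)\in L^{1}$, Fourier inversion gives $\widehat{g_{y}}=F_{j}(\cdot+iy)$, that is $\int_{\mathbb{R}^{n}}f_{j}(\xi)e^{iz\cdot\xi}\,d\xi=F_{j}(z)$ for $z=x+iy$. As $\supp f_{j}\subseteq\Gamma$ and $\eta\equiv1$ on $\Gamma$, the left-hand side equals $\laplace{f_{j}}{z}$, completing the proof. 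I expect the contour-independence and support arguments to be routine, with essentially all of the work concentrated in the associated-function duality of the growth step.
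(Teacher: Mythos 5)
Your proposal is correct and follows the same overall architecture as the paper's proof: inverse Fourier transform on horizontal slices, contour independence via the Cauchy--Riemann equations (justified by the Cauchy-formula bound on $\partial F_{j}/\partial y_{k}$), the support argument from acuteness of $\Gamma$ by sending $y=\lambda y_{0}\to\infty$ along a ray with $y_{0}\cdot\xi_{0}<0$ and $\varepsilon$ small, and Fourier inversion at the end. The one place where you genuinely diverge is the growth estimate, the step you correctly identify as carrying all the weight. The paper fixes $\varepsilon=1/2$, integrates the resulting inequality over $\lambda\in(0,\infty)$, and invokes the lower bound $\int_{0}^{\infty}\exp(-(1+|\xi|)\lambda-N_{\ell_{p}}^{*}(1/(\lambda\Delta_{C}(y_{0}))))\,d\lambda\geq L''\exp(-N_{\ell_{p}}(c(1+|\xi|)))$ from \cite[Lemma~5.2.6]{carmichael2007boundary}. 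You instead optimize the exponent pointwise, choosing $s\sim N_{\ell_{p}}(\omega\cdot\xi)/(d\,\omega\cdot\xi)$ and applying \eqref{eq:M*bound} for $L_{p}N_{p}$ (which does satisfy $(M.1)^{*}$, since $(\ell_{p})$ is non-decreasing and $N_{p}^{*}$ is log-convex) followed by iterated use of \eqref{eq:M2} to absorb the constant $1+4(n_{1}\ell_{1}+1)/d$ into a rescaling $h_{p}=\ell_{p}H^{-k}$. This is exactly the computation the authors themselves perform in the proof of Theorem \ref{t:boundednessLaplace}(ii), so your version is more self-contained (no external integral lemma) at the cost of a small amount of bookkeeping: you should note that the choice of $s$ is only admissible for $\omega\cdot\xi\geq n_{1}\ell_{1}+1$ (where $N_{\ell_{p}}(t)>0$ and \eqref{eq:M*bound} applies), with a trivial constant-$s$ bound on the complementary bounded set, and that the term $\varepsilon s$ is bounded precisely because $N_{\ell_{p}}(t)=o(t)$, as you say. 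Both routes yield the same conclusion $|f_{j}(\xi)|\leq Ce^{N_{h_{p}}(|\xi|)}$ uniformly in $j$.
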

	
	\begin{proof} We closely follow the proof of the lemma in \cite[Section~10.5, p.~148]{vladimirovbook}. We may assume that $L_{p} N_{p}$ satisfies $(M.1)$ and $(M.2)$. From (\ref{eq:holomorphicaslaplace}) it follows in particular that
			\[ (1 + |\cdot|) F_{j}(\cdot + i y) \in L^{1}(\reals^{n}),  \qquad \forall y \in C, j \in I. \] 
From the Cauchy formula we obtain for each compact subset $K\subset C$ and each $j \in I$
			\[ \sup_{y \in K} \left| \frac{\partial}{\partial y_{k}} F_{j}(x + iy) \right| = \bigoh{\frac{1}{(1 + |x|)^{n + 2}}}, \qquad k \in \{1, \ldots, n\}. \]
		Therefore,
			\[ g_{j}(\xi, y) = (2\pi)^{-n} e^{\xi \cdot y} \mathcal{F}\left\{ F_{j}(\cdot + iy); \xi \right\} \in C^{1}(\reals^{n} \times C) , \qquad j \in I , \]
			where $\mathcal{F}$ stands for the Fourier transform.
			Furthermore, for each $k \in \{1, \ldots, n\}$,
			\begin{equation*}
				\frac{\partial}{\partial y_{k}} g_{j}(\xi, y)
											  = (2 \pi)^{-n} e^{\xi \cdot y} \left[ \xi_{k} \mathcal{F}\left\{F_{j}(\cdot + iy);\xi\right\} + i \mathcal{F}\left\{\frac{\partial}{\partial x_{k}} F_{j}(\cdot + iy); \xi \right\} \right] = 0,
			\end{equation*}
so that the $C^1$ functions $f_{j}(\xi): = g_{j}(\xi, y)$ do not depend on $ y \in C$. By (\ref{eq:holomorphicaslaplace}), there is $L' = L'_{\varepsilon} > 0$ such that
			\begin{equation} 
				\label{eq:holomorphicaslaplace-intermediate}
				\left| f_{j}(\xi) \right| \leq L' \exp\left( \xi \cdot y + \varepsilon |y| + N_{\ell_{p}}^{*}\left(\frac{1}{\Delta_{C}(y)}\right) \right) , \qquad \xi \in \reals^{n},\: y \in C\: , j \in I .
			\end{equation}
		Take any $\xi_{0} \notin \Gamma$. As $(\Gamma^{*})^{*} = \Gamma$, there is some $y_{0} \in C$ such that $\xi_{0} \cdot y_{0} = -1$. Since $\Delta_{C}(\lambda y_{0}) = \lambda \Delta_{C}(y_{0})$ for $\lambda > 0$, we conclude from (\ref{eq:holomorphicaslaplace-intermediate}) for $\varepsilon = (2|y_{0}|)^{-1}$ and $y = \lambda y_{0}$ that
			\[ \left| f_{j}(\xi_{0}) \right| \leq L' \exp\left( - \frac{\lambda}{2} + N_{\ell_{p}}^{*}\left(\frac{1}{\lambda \Delta_{C}(y_{0})}\right) \right) , \qquad  \lambda > 0 . \]
		By letting $\lambda \rightarrow \infty$, it follows that this is only possible if $f(\xi_{0}) = 0$. We conclude that $\supp f_{j} \subseteq \Gamma$ for each $ j \in I$.  
		
		Now take an arbitrary $y_{0} \in C$ such that $|y_{0}| = 1$, then (\ref{eq:holomorphicaslaplace-intermediate}) gives us for $\varepsilon = 1/2$ and $y = \lambda y_{0}$, $\lambda > 0$, 
			\[ \left| f_{j}(\xi) \right| \exp\left( - (1 + |\xi|) \lambda - N_{\ell_{p}}^{*}\left(\frac{1}{\lambda \Delta_{C}(y_{0})}\right) \right) \leq L' e^{-\frac{\lambda}{2}} . \]
		We integrate this inequality with respect to $\lambda$ on $(0, \infty)$ in order to gain an estimate on the $f_{j}$. The 1-dimensional case of \cite[Lemma 5.2.6, p. 97]{carmichael2007boundary} applied to the open cone $(0, \infty)$, yields the existence of constants $L'', c > 0$ such that
			\[ \int_{0}^{\infty}  \exp\left( - (1 + |\xi|) \lambda - N_{\ell_{p}}^{*}\left(\frac{1}{\lambda \Delta_{C}(y_{0})}\right) \right) d\lambda \geq L'' \exp\left( - N_{\ell_{p}}(c (1 + |\xi|)) \right) . \]
		Hence, using \cite[Lemma 2.1.3, p. 16]{carmichael2007boundary}, it follows for any $\xi \in \reals^{n}$ and $j \in I$ that 
			\[ \left| f_{j}(\xi) \right| \leq \frac{2 L'}{L''} \exp\left( N_{\ell_{p}}(c(1 + |\xi|)) \right) \leq \frac{2 L'}{L''} \exp\left(N_{\ell_{p}}(2 c) + N_{\ell_{p}}(2 c |\xi|) \right) . \]
	 The proof is complete noticing that by the Fourier inversion $F_{j}(z) = \laplace{f_{j}}{z}$.
	\end{proof}
	
	Recall an ultrapolynomial \cite{ultradistributions1} of type $\beurou$ is an entire function 
	\[ P(z) = \sum_{m = 0}^{\infty} a_{m} z^{m} , \qquad a_{m} \in \complexes , \]
where the coefficients satisfy $|a_{m}| \leq L / H_{m} M_{m}$ for some $(h_{p}) \in \mathfrak{R}^{\beurou}$ and $L>0$. It is ensured by condition $(M.2)$ that the multiplication of two ultrapolynomials is again an ultrapolynomial (cf. \cite[Proposition~4.5, p.~58]{ultradistributions1} and \eqref{eqequivalencegrowth1}).

	\begin{lemma}
		\label{l:ultrapolynomial} Let $\Gamma$ be a solid cone \emph{solid} and let $(\ell_{p}) \in \mathfrak{R}^{\beurou}$. Suppose that $L_pM_p$ satisfies $(M.1)$, $(M.2)$, and $(M.3)$. Then, there are an ultrapolynomial $P$ of type $\beurou$ and constants $L, L' \geq 1$ such that
			\begin{equation}
				\label{eq:ultrapolylowerbound}
				e^{M_{\ell_{p}}(|z|)} \leq \left| P(z) \right| \leq L' e^{M_{\ell_{p}}(L |z|)} , \qquad \forall z \in T^{C} . 
			\end{equation}
	\end{lemma}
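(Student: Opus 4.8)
The plan is to reduce the $n$-dimensional two-sided estimate to a single one-variable infinite product and then to glue the directions together using the solidity of $\Gamma$. Write $\nu_{p} := L_{p}M_{p}/(L_{p-1}M_{p-1}) = \ell_{p}m_{p}$ for the quotient sequence of $L_{p}M_{p}$. For a constant $\kappa > 0$ to be fixed below, I would first introduce the entire function
\[
	Q(w) = \prod_{p = 1}^{\infty}\left(1 - \frac{iw}{\kappa\nu_{p}}\right), \qquad w \in \complexes .
\]
The product converges because $(M.3)$ for $L_{p}M_{p}$ yields $\sum_{p}1/\nu_{p} < \infty$. Exactly as in Komatsu's construction of ultradifferential operators \cite{ultradistributions1}, conditions $(M.1)$, $(M.2)$, $(M.3)$ for $L_{p}M_{p}$ guarantee that $Q$ is an ultrapolynomial whose coefficients decay like those associated with the sequence $L_{p}M_{p}$; in particular the upper bound
\[
	|Q(w)| \leq C\, e^{M_{\ell_{p}}(H|w|/\kappa)}, \qquad w \in \complexes ,
\]
holds for some $C, H \geq 1$. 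This is the routine half of the estimate.

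The crucial point is the lower bound on the \emph{closed upper half-plane}. Writing $w = u + iv$ with $v \geq 0$ and $\mu_{p} := \kappa\nu_{p}$, one computes
\[
	\left|1 - \frac{iw}{\mu_{p}}\right|^{2} = \left(1 + \frac{v}{\mu_{p}}\right)^{2} + \frac{u^{2}}{\mu_{p}^{2}} = 1 + \frac{2v}{\mu_{p}} + \frac{|w|^{2}}{\mu_{p}^{2}} \geq 1 + \frac{|w|^{2}}{\mu_{p}^{2}} .
\]
Since $\mu_{1}\cdots\mu_{m} = \kappa^{m}L_{m}M_{m}/M_{0}$, keeping a single monomial from the expansion of the product gives
\[
	|Q(w)|^{2} \geq \prod_{p = 1}^{\infty}\left(1 + \frac{|w|^{2}}{\mu_{p}^{2}}\right) \geq \sup_{m \in \naturals}\left(\frac{(|w|/\kappa)^{m}M_{0}}{L_{m}M_{m}}\right)^{2} = e^{2 M_{\ell_{p}}(|w|/\kappa)} ,
\]
so that $|Q(w)| \geq e^{M_{\ell_{p}}(|w|/\kappa)}$ whenever $\im w \geq 0$, with no loss of constant.

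Next comes the geometric step, where solidity of $\Gamma$ enters. As $\interior\Gamma \neq \emptyset$, I would pick unit vectors $e_{1}, \dots, e_{n} \in \interior\Gamma$ forming a basis of $\reals^{n}$; then $\sum_{k}e_{k}e_{k}^{\top}$ is positive definite and there is $c > 0$ with $\sum_{k = 1}^{n}(e_{k}\cdot\xi)^{2} \geq c|\xi|^{2}$ for all $\xi \in \reals^{n}$. For $z = x + iy$ this yields $\sum_{k}|e_{k}\cdot z|^{2} = \sum_{k}(e_{k}\cdot x)^{2} + \sum_{k}(e_{k}\cdot y)^{2} \geq c|z|^{2}$, so some index $k_{0}$ obeys $|e_{k_{0}}\cdot z| \geq \sqrt{c/n}\,|z|$; on the other hand, each $e_{k} \in \Gamma$ forces $\im(e_{k}\cdot z) = e_{k}\cdot y \geq 0$ on $T^{C}$ by \eqref{eq:dotestimate}, so every $e_{k}\cdot z$ lies in the closed upper half-plane. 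Setting $\kappa = \sqrt{c/n}$ and $P(z) = \prod_{k = 1}^{n}Q(e_{k}\cdot z)$, the function $P$ is a finite product of ultrapolynomials (composition with the linear forms $e_{k}\cdot z$ preserves the ultrapolynomial property), hence an ultrapolynomial of type $\beurou$ by the multiplication property ensured by $(M.2)$ for $L_{p}M_{p}$. For the lower bound, every factor is $\geq 1$ while the $k_{0}$-th factor is $\geq e^{M_{\ell_{p}}(|e_{k_{0}}\cdot z|/\kappa)} \geq e^{M_{\ell_{p}}(|z|)}$, giving $|P(z)| \geq e^{M_{\ell_{p}}(|z|)}$ on $T^{C}$. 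For the upper bound, $|e_{k}\cdot z| \leq |z|$ yields $|P(z)| \leq C^{n}e^{n M_{\ell_{p}}(H|z|/\kappa)}$, and a finite iteration of \eqref{eq:M2} converts $n M_{\ell_{p}}(t)$ into $M_{\ell_{p}}(H^{\lceil\log_{2}n\rceil}t) + \text{const}$, producing the claimed $L' e^{M_{\ell_{p}}(L|z|)}$ with $L, L' \geq 1$.

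The genuinely delicate part is the one-variable construction: verifying that the infinite product is an ultrapolynomial of the correct type together with the matching associated-function upper bound. This is where the full strength of $(M.3)$ (convergence and coefficient estimates) and of $(M.2)$ is needed, and it rests on Komatsu's machinery. By contrast, the half-plane lower bound and the frame argument are elementary once this block is available, and the solidity of $\Gamma$ is used precisely to secure a basis inside $\interior\Gamma$, which is indispensable for controlling $|z|$ in \emph{every} direction rather than along a single linear form.
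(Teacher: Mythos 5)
Your proposal is correct and follows essentially the same route as the paper: the canonical one-variable infinite product built from the quotients $\ell_{p}m_{p}$ (Komatsu's ultrapolynomial estimates for the upper bound, a single-monomial extraction for the half-plane lower bound), composed with linear forms $e_{k}\cdot z$ for a basis $\{e_{k}\}\subset\interior\Gamma$ and multiplied, with $(M.2)$ absorbing the resulting $n$-fold power of the associated function. Your frame inequality $\sum_{k}|e_{k}\cdot z|^{2}\geq c|z|^{2}$ followed by selecting the maximizing index is in fact a slightly cleaner justification of the directional lower bound than the paper's statement, so there is nothing to fix.
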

	
	\begin{proof} Set
			\[ \widetilde{P}(z) := \prod_{p=1 }^{\infty} \left( 1 + \frac{z}{\ell_{p} m_{p}} \right) , \qquad  z \in \complexes , \]
an ultrapolynomial of type $\beurou$ satisfying a bound $P(z)=O(e^{M_{\ell_p}(L''|z|)})$ \cite[Proposition~4.5 and Proposition~4.6, pp.~58--59]{ultradistributions1}. Now for $\re z \geq 0$ as in \cite[p.~89]{ultradistributions1}
			\[ \left| \widetilde{P}(z) \right| \geq \sup_{p \in \naturals} \prod_{q = 1}^{p} \frac{|z|}{\ell_{q} m_{q}} = \sup_{p \in \naturals} \frac{M_{0} |z|^{p}}{L_{p} M_{p}} = e^{M_{\ell_{p}}(|z|)} . \]
		Since we assumed $\interior \Gamma \neq \emptyset$, there is a basis $\{ e_{1}, \ldots, e_{n} \}$ of $\reals^{n}$ such that $e_{j} \in \interior \Gamma$ for $1 \leq j \leq n$. Find also $\lambda>0$ such that $\lambda\min_{j}|e_{j}\cdot z|\geq |z|$ for all $z\in C$. Now define
			\[ P(z) := \prod_{j = 1}^{n} \widetilde P(- \lambda n^{1/2} i e_{j} \cdot z) , \]
	which is an ultrapolynomial of type $\beurou$ as well and the upper bound in \eqref{eq:ultrapolylowerbound} holds because of \eqref{eq:M2} applied to $M_{\ell_p}$. Since for any $z \in T^{C}$ we have $\re(- n i e_{j} \cdot z)> 0$, $1 \leq j \leq n$, one then obtains for any $z$ in the tube domain
			\[ \left| P_{\ell_{p}}(z) \right| \geq \exp\left( \sum_{j = 1}^{n} M_{\ell_{p}}(n^{1/2}\lambda |e_{j} \cdot z|) \right) \geq \exp\left( M_{\ell_{p}} \left( |z| \right) \right) . \]
			\end{proof}
	
	\begin{lemma}
		\label{l:boundonsupdiff} Let $(\ell_p)\in\mathfrak{R}^{\ast}$.
		It holds that for any $y \in C$
			\begin{equation}
				\label{eq:boundonsupdiff}
				\sup_{\xi \in \Gamma} \exp\left(N_{\ell_p}( |\xi|) - y \cdot \xi\right) \leq \exp\left(N^{*}_{\ell_p}\left(\frac{1}{\Delta_{C}(y)}\right)\right) . 
			\end{equation}
	\end{lemma}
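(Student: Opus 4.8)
The plan is to reduce the supremum over $\xi\in\Gamma$ to a one-variable optimization and then recognize the outcome as the very definition of $N^{*}_{\ell_{p}}$. First, since $y\in C=\interior\Gamma^{\ast}$ we have $\Delta_{C}(y)>0$, and the cone estimate \eqref{eq:dotestimate} gives $y\cdot\xi\geq\Delta_{C}(y)|\xi|$ for every $\xi\in\Gamma$. Writing $d=\Delta_{C}(y)$ and $t=|\xi|$, it therefore suffices to establish the scalar bound
\[
\sup_{t\geq0}\bigl(N_{\ell_{p}}(t)-dt\bigr)\leq N^{*}_{\ell_{p}}(1/d),
\]
because each $\xi\in\Gamma$ contributes at most $N_{\ell_{p}}(|\xi|)-d|\xi|$ to the exponent on the left-hand side of \eqref{eq:boundonsupdiff}.

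Next, I would unfold the definition of the associated function $N_{\ell_{p}}$ of the sequence $N_{p}L_{p}$, namely $e^{N_{\ell_{p}}(t)}=\sup_{p\in\naturals}t^{p}N_{0}/(N_{p}L_{p})$, so that
\[
\sup_{t\geq0}e^{N_{\ell_{p}}(t)-dt}=\sup_{t\geq0}\sup_{p\in\naturals}\frac{N_{0}\,t^{p}e^{-dt}}{N_{p}L_{p}}=\sup_{p\in\naturals}\frac{N_{0}}{N_{p}L_{p}}\sup_{t\geq0}\bigl(t^{p}e^{-dt}\bigr),
\]
the interchange of the two suprema being harmless since all terms are non-negative.

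The key computation is the elementary maximization $\sup_{t\geq0}t^{p}e^{-dt}=(p/(ed))^{p}$, attained at $t=p/d$; combined with the inequality $p^{p}\leq e^{p}p!$ already exploited in the proof of Lemma \ref{l:expbound}, this yields $\sup_{t\geq0}t^{p}e^{-dt}\leq p!/d^{p}$. Substituting this and using $N^{*}_{p}=N_{p}/p!$ together with $N^{*}_{0}=N_{0}$ then gives
\[
\sup_{t\geq0}e^{N_{\ell_{p}}(t)-dt}\leq\sup_{p\in\naturals}\frac{N_{0}\,p!}{N_{p}L_{p}\,d^{p}}=\sup_{p\in\naturals}\frac{N^{*}_{0}\,(1/d)^{p}}{N^{*}_{p}L_{p}}=e^{N^{*}_{\ell_{p}}(1/d)},
\]
which is exactly the scalar bound sought. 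Exponentiating the chain $N_{\ell_{p}}(|\xi|)-y\cdot\xi\leq N_{\ell_{p}}(|\xi|)-d|\xi|\leq N^{*}_{\ell_{p}}(1/d)$ and taking the supremum over $\xi\in\Gamma$ then delivers \eqref{eq:boundonsupdiff}.

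I do not expect any serious obstacle here: the argument is a self-contained convexity/duality computation that requires neither $(M.1)$ nor any of the $(M.2)$-type conditions, only the positivity of $\Delta_{C}(y)$ and the estimate \eqref{eq:dotestimate}. The only points demanding minor care are the legitimacy of swapping the two suprema (justified by non-negativity) and the convention $0^{0}=1$ for the $p=0$ term, both of which contribute the harmless value $1$.
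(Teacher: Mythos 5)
Your proof is correct. The first step---using \eqref{eq:dotestimate} to replace $y\cdot\xi$ by $\Delta_C(y)|\xi|$ and thereby reduce \eqref{eq:boundonsupdiff} to the scalar inequality $\sup_{t\geq0}\{N_{\ell_p}(t)-st\}\leq N^*_{\ell_p}(1/s)$---is exactly the reduction the paper performs. Where you diverge is in how that scalar inequality is established: the paper simply cites \cite[Lemma~5.6]{Vogt1984}, whereas you prove it from scratch by unfolding the definition \eqref{eq:assfunc} of the associated function, interchanging the two suprema, computing $\sup_{t\geq0}t^pe^{-st}=(p/(es))^p\leq p!/s^p$, and reassembling the result as $e^{N^*_{\ell_p}(1/s)}$. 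Your computation is sound (the sup interchange is valid for any non-negative doubly indexed family, and the $t=0$ and $p=0$ edge cases are handled by the stated conventions), and it buys a small bonus: it shows the lemma needs no hypothesis on the weight sequence at all, whereas the paper's phrasing ``we only make use of $(M.1)^*$'' reflects the hypotheses of the cited external lemma. The trade-off is a few extra lines of elementary calculus in exchange for self-containedness.
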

	
	\begin{proof} We only make use of $(M.1)^{\ast.}$
		Using the estimate (\ref{eq:dotestimate}), we obtain for any $y \in C$
			\[ \sup_{\xi \in \Gamma} e^{N_{\ell_p}( |\xi|) - y \cdot \xi} \leq \sup_{t \geq 0} e^{N_{\ell_p}(t) - \Delta_{C}(y) t} , \]
		so that (\ref{eq:boundonsupdiff}) follows from \cite[Lemma~5.6]{Vogt1984} 
			\[ \sup_{t > 0} \left\{ N_{\ell_p}(t) - st \right\} \leq N_{\ell_p}^{*}\left(\frac{1}{s}\right) , \qquad  s > 0 . \]
	\end{proof}
	
	\begin{theorem}
		\label{t:boundednessLaplaceStronger}		Suppose that the cone $\Gamma$ is solid, $M_p$ and $N_p$ both satisfy $(M.1)$ and $(M.2)$, and $M_p$ also satisfies $(M.3)$.   Then, a set $B\subset \sss^{\prime \beurou}_{\dagger}[\Gamma]$ is bounded if and only if there are $L>0$ and $(\ell_{p}) \in \mathfrak{R}^{\beurou}$ such that for all $f \in B$
			\begin{equation}
				\label{eq:boundednessLaplaceStronger}
				\left| \laplace{f}{z} \right| \leq L \exp\left( M_{\ell_{p}}(|z|) + N_{\ell_{p}}^{*}\left(\frac{1}{\Delta_{C}(\im z)} \right) \right) , \qquad z \in T^{C} . 
			\end{equation}
	\end{theorem}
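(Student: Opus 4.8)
The plan is to prove both implications of Theorem \ref{t:boundednessLaplaceStronger}, with essentially all of the content residing in the necessity, i.e.\ in upgrading the bound \eqref{eq:boundednessLaplace1} of Theorem \ref{t:boundednessLaplace} by deleting its $\varepsilon|\im z|$ summand. Sufficiency is immediate from Theorem \ref{t:boundednessLaplace}(ii): assuming \eqref{eq:boundednessLaplaceStronger}, I would fix any $\omega\in C$ with $|\omega|=1$ and restrict to the ray $z=x+i\sigma\omega$, so that $|z|=\sqrt{|x|^{2}+\sigma^{2}}$ and $\Delta_{C}(\im z)=\sigma\Delta_{C}(\omega)$. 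Then \eqref{eq:boundednessLaplaceStronger} becomes a bound of the shape \eqref{eq:boundednessLaplace2} once the harmless constants coming from $\sqrt{1+\sigma_{0}^{2}}$ and $\Delta_{C}(\omega)^{-1}$ are absorbed by rescaling the arguments of $M_{\ell_{p}}$ and $N^{*}_{\ell_{p}}$ through $(M.2)$ (applied to $L_{p}M_{p}$ and $L_{p}N^{*}_{p}$, after possibly shrinking $(\ell_{p})$), and Theorem \ref{t:boundednessLaplace}(ii) delivers boundedness of $B$.

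For the necessity I would begin from Theorem \ref{t:boundednessLaplace}(i), which already supplies a sequence $(\ell_{p})\in\mathfrak{R}^{\beurou}$ and, for each $\varepsilon>0$, a constant $L_{\varepsilon}$ so that \eqref{eq:boundednessLaplace1} holds for all $f\in B$. After shrinking $(\ell_{p})$ so that $L_{p}M_{p}$ inherits $(M.1)$, $(M.2)$, and $(M.3)$, Lemma \ref{l:ultrapolynomial} produces, for any down-scaled sequence, an ultrapolynomial $P$ of type $\beurou$ obeying \eqref{eq:ultrapolylowerbound}. I would choose $P$ built from the sequence obtained by dividing $(\ell_{p})$ by a constant $c$ large enough that \eqref{eq:M2prime} forces its lower bound to dominate $(1+|\re z|)^{n+2}\,e^{M_{\ell_{p}}(|z|)}$. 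Setting $G_{f}(z):=\laplace{f}{z}/P(z)$, which is holomorphic on $T^{C}$ since $|P|\geq 1$ there, the division cancels the $M_{\ell_{p}}(|z|)$ factor of \eqref{eq:boundednessLaplace1} together with the polynomial weight, so that $\{G_{f}\}_{f\in B}$ satisfies the hypothesis \eqref{eq:holomorphicaslaplace} of Lemma \ref{l:holomorphicaslaplace}, whose right-hand side tolerates precisely the surviving $\varepsilon|\im z|$ and $N^{*}_{\ell_{p}}$ terms.

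Lemma \ref{l:holomorphicaslaplace} then represents each $G_{f}=\laplace{g_{f}}{\cdot}$ with $g_{f}\in C^{1}(\reals^{n})$, $\supp g_{f}\subseteq\Gamma$, and $\{e^{-N_{h_{p}}(|\cdot|)}g_{f}\}_{f\in B}$ bounded in $L^{\infty}(\reals^{n})$. This is the decisive point: the $\varepsilon$ in \eqref{eq:boundednessLaplace1} is merely an artifact of the $2\varepsilon$-fattening of $\Gamma$ imposed by the mollifier of Lemma \ref{l:expbound}, whereas $g_{f}$ is genuinely supported in $\Gamma$. I would therefore estimate $\laplace{g_{f}}{z}=\int_{\Gamma}g_{f}(\xi)e^{iz\cdot\xi}\,d\xi$ directly, with no fattening and hence no $\varepsilon$: writing $e^{-y\cdot\xi}=e^{-y\cdot\xi/2}e^{-y\cdot\xi/2}$, I would control $\sup_{\xi\in\Gamma}e^{N_{h_{p}}(|\xi|)-y\cdot\xi/2}$ by Lemma \ref{l:boundonsupdiff} (giving $e^{N^{*}_{h_{p}}(2/\Delta_{C}(y))}$ via $\Delta_{C}(y/2)=\Delta_{C}(y)/2$) and make the remaining integral converge through $e^{-y\cdot\xi/2}\leq e^{-\Delta_{C}(y)|\xi|/2}$ by \eqref{eq:dotestimate}, which contributes only $O(\Delta_{C}(y)^{-n})$. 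Since $\Delta_{C}(y)^{-n}$ is polynomial in $1/\Delta_{C}(y)$ and the factor $2$ reabsorbs via $(M.2)$ (which $N^{*}_{p}$ inherits from $(M.2)$ for $N_{p}$), this yields a uniform bound $|\laplace{g_{f}}{z}|\leq C\,e^{N^{*}_{h'_{p}}(1/\Delta_{C}(\im z))}$. Finally, from $\laplace{f}{z}=P(z)\laplace{g_{f}}{z}$ and the upper bound in \eqref{eq:ultrapolylowerbound}, absorbing the dilation constant $L$ by \eqref{eq:M2} and merging $(\ell_{p})$ and $(h'_{p})$ into their componentwise minimum, I obtain \eqref{eq:boundednessLaplaceStronger} with no $\varepsilon$-term.

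The main obstacle I expect is exactly the bookkeeping surrounding the disappearance of $\varepsilon$: one must ensure, first, that dividing by $P$ leaves a bound whose only residual exponential excess is of $N^{*}$-type (so it can be fed into Lemma \ref{l:holomorphicaslaplace}), which forces the careful down-scaling of the ultrapolynomial's sequence to swallow the $(1+|\re z|)^{n+2}$ weight via \eqref{eq:M2prime}; and, second, that the direct integral estimate of $\laplace{g_{f}}{z}$ over $\Gamma$, with its polynomial $\Delta_{C}(y)^{-n}$ loss, still closes back to the clean form \eqref{eq:boundednessLaplaceStronger} after multiplication by $P$. Guaranteeing uniformity over $B$ throughout—that every constant and the final sequence are independent of $f$—is precisely what the strengthened hypotheses $(M.2)$ on $M_{p}$ and $N_{p}$, condition $(M.3)$, and solidity of $\Gamma$ are there to provide.
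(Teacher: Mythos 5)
Your proposal is correct and follows essentially the same route as the paper: divide $\laplace{f}{z}$ by an ultrapolynomial built (via Lemma \ref{l:ultrapolynomial} and \eqref{eq:M2prime}, with the sequence rescaled by a power of $H$) so as to absorb both $e^{M_{\ell_p}(|z|)}$ and the weight $(1+|\re z|)^{n+2}$, apply Lemma \ref{l:holomorphicaslaplace} to realize the quotients as Laplace transforms of $C^1$ functions genuinely supported in $\Gamma$, and then re-estimate these Laplace transforms directly with Lemma \ref{l:boundonsupdiff} before multiplying back by the ultrapolynomial. The only (immaterial) divergence is in the last integral estimate, where you split $e^{-y\cdot\xi}$ in half and reabsorb the resulting $\Delta_C(y)^{-n}$ into the $N^{*}$-term, whereas the paper instead extracts the integrable factor $(1+|\xi|)^{-n-2}$ from $e^{N_{\ell'_p}(|\xi|)}$ via $(M.2)'$.
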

	
	\begin{proof} We only need to show that if $B=\{f_j\}_{j\in I}$ is bounded then \eqref{eq:boundednessLaplaceStronger} holds.  By Theorem \ref{t:boundednessLaplace}, there is $(\ell_{p}) \in \mathfrak{R}^{\ast}$ such that for any $\varepsilon > 0$ there is $L = L_{\varepsilon} > 0$ such that for all $j \in I$
			\[ \left| \laplace{f_{j}}{z} \right| \leq L \exp\left( \varepsilon |y| + M_{\ell_{p}}(|z|) + N_{\ell_{p}}^{*}\left(\frac{1}{\Delta_{C}(y)} \right) \right) , \qquad \forall z \in T^{C} . \]
We may assume $L_{p} M_{p}$ satisfies $(M.1)$, $(M.2)$ and $(M.3)$. Let $P$ be the ultrapolynomial constructed as in Lemma \ref{l:ultrapolynomial}. Fix $k\geq H^{n+2}$, where $H$ is the constant occurring in $(M.2)'$ for $L_pM_p$. We consider the ultrapolynomial $Q(z)=P(kz)$, so that it satisfies the bounds $e^{M_{\ell_{p}}(k|z|)} \leq |Q(z)| \leq L'e^{M_{\ell_{p}}(\nu |z|)}$ for all $ z \in T^{C}$ and some $\nu > 0$. 
Set now $F_{j}(z) = \laplace{f_{j}}{z}$, which are holomorphic functions on $T^{C}$. In view of \eqref{eq:M2prime}, the family $\{F_{j} / Q\}_{j \in I}$ satisfies the conditions of Lemma \ref{l:holomorphicaslaplace}, so that there are $g_{j} \in C^{1}(\reals^{n})$ with $\supp g_{j} \subseteq \Gamma$ for which there is some $(\ell'_{p}) \in \mathfrak{R}^{\beurou}$ such that $\{ \exp(- N_{\ell'_{p}}(|\cdot|)) g_{j} \}_{j \in I}$ is a bounded subset of $L^{\infty}(\reals^{n})$ and $F_{j}(z) = Q(z) \laplace{g_{j}}{z}$ for each $j \in I$. Now, taking into account \eqref{eq:M2prime} (we may assume $H$ is the same constant for both $L_pM_p$ and $L'_pN_p$) and Lemma \ref{l:boundonsupdiff}, there are some $L'',L''' > 0$ such that for all $j \in I$
			\begin{align*} 
				\left| F_{j}(z) \right| &\leq L'' e^{M_{\ell_{p}}(\nu |z|)} \int_{\Gamma} e^{N_{\ell'_{p}}(|\xi|) - y \cdot \xi} d\xi \\ 
							     &\leq L''' e^{M_{\ell_{p}}(\nu |z|)} \sup_{\xi \in \Gamma} e^{N_{\ell'_{p}}(k |\xi|) - y \cdot \xi} \int_{\Gamma}\frac{d\xi} {(1+|\xi|)^{n+2}} \\
							     &\leq A L' \left( \int_{\Gamma}\frac{d\xi} {(1+|\xi|)^{n+2}} \right) \exp\left( M_{\ell_{p}}(\nu |z|) + N_{\ell'_{p}}^{*}\left(\frac{k}{\Delta_{C}(y)}\right) \right)  .
			\end{align*}
Hence, we obtain a bound of type (\ref{eq:boundednessLaplaceStronger}) for the sequence $k_{p} = \min\{ \ell_{p} / \nu , \ell'_{p} / k \}$.
	\end{proof}
Theorem \ref{t:boundednessLaplaceStronger} can be used to draw further topological information. In fact, it leads to an isomorphism between $\mathcal{S}^{\prime \ast}_{\dagger}[\Gamma]$ and analogs of the Vladimirov algebra \cite[Chapter~12]{vladimirovbook} $H(T^{C})$ of holomorphic functions on $T^{C}$. Given $\ell>0$, we define the Banach space $\mathcal{O}_{\ell}(T^{C})$ of all holomorphic functions $F$ on the tube domain $T^{C}$ that satisfy the bounds
\[
\|F\|_{\ell}=\sup_{z\in T^{C}}|F(z)|e^{-M(\ell |z|)-N^{\ast}\left(\frac{\ell}{\Delta_{C}(\im z)}\right)}<\infty.
\] 
We then introduce the $(DFS)$ and $(FS)$ spaces 
\[
\mathcal{O}^{(M_p)}_{(N_p)}(T^{C})=\varinjlim_{\ell} \mathcal{O}_{\ell}(T^{C}) \qquad \mbox{and}\qquad \mathcal{O}^{\{M_p\}}_{\{N_p\}}(T^{C})=\varprojlim_{\ell} \mathcal{O}_{\ell}(T^{C}).
\]

The arguments we have given above actually show that the Laplace transform maps $\mathcal{S}^{\prime \ast}_{\dagger}[\Gamma]$ bijectively into  $\mathcal{O}^{\ast}_{\dagger}(T^{C})$ and that this mapping and its inverse map bounded sets into bounded sets (cf. the property \eqref{eqequivalencegrowth1} in the Roumieu case). Since the spaces under consideration are all bornological, we might summarize the results from this section as follows,

\begin{theorem}\label{Laplace isomorphism ultradistributions} Let $\Gamma$ be a solid convex acute cone and suppose that $M_p$ and $N_p$ both satisfy $(M.1)$ and $(M.2)$, while $M_p$ also satisfies $(M.3)$. Then, the Laplace transform
\[ \mathcal{L}:\mathcal{S}^{\prime \ast}_{\dagger}[\Gamma]\to \mathcal{O}^{\ast}_{\dagger}(T^{C})
\]
is an isomorphism of locally convex spaces. 
\end{theorem}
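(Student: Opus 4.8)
The plan is to deduce Theorem \ref{Laplace isomorphism ultradistributions} as a structural consequence of the bounded-set characterization in Theorem \ref{t:boundednessLaplaceStronger} together with the bornological nature of all the spaces involved. I would establish in turn that (a) $\mathcal{L}$ is a well-defined bijection of $\sss^{\prime\beurou}_{\dagger}[\Gamma]$ onto $\mathcal{O}^{\beurou}_{\dagger}(T^{C})$, (b) both $\mathcal{L}$ and $\mathcal{L}^{-1}$ carry bounded sets to bounded sets, and (c) since the spaces are bornological, this boundedness automatically upgrades to continuity of $\mathcal{L}$ and of $\mathcal{L}^{-1}$, giving the topological isomorphism.

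The key preliminary is a dictionary between the two ways of measuring growth. Writing $(\ell_{p})\equiv 1/\ell$ for the constant sequence, the associated function of $M_{p}L_{p}=M_{p}\ell^{-p}$ is exactly $M(\ell\,|z|)$, and likewise $N^{\ast}_{\ell_{p}}(1/\Delta_{C}(\im z))=N^{\ast}(\ell/\Delta_{C}(\im z))$; hence the estimate \eqref{eq:boundednessLaplaceStronger} for such an $(\ell_{p})$ is nothing but $\|\laplace{f}{\cdot}\|_{\ell}\leq L$, i.e. membership in $\mathcal{O}_{\ell}(T^{C})$. With this identification I would argue well-definedness as follows. Applying Theorem \ref{t:boundednessLaplaceStronger} to the singleton $\{f\}$ furnishes, in the Beurling case, a constant sequence and therefore places $\laplace{f}{\cdot}$ in some $\mathcal{O}_{\ell}(T^{C})\subseteq\mathcal{O}^{(M_{p})}_{(N_{p})}(T^{C})$; in the Roumieu case it furnishes an increasing sequence $(\ell_{p})$, and \eqref{eqequivalencegrowth1} converts the resulting bound into one valid for every $\ell>0$, so that $\laplace{f}{\cdot}\in\bigcap_{\ell>0}\mathcal{O}_{\ell}(T^{C})=\mathcal{O}^{\{M_{p}\}}_{\{N_{p}\}}(T^{C})$. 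Injectivity is inherited from that of the Fourier, hence Laplace, transform, exactly as invoked in the proof of Corollary \ref{t:generalTauberianTheorem}.

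Surjectivity is essentially the construction already performed inside the proof of Theorem \ref{t:boundednessLaplaceStronger}, now read as a way of producing a preimage: given $F\in\mathcal{O}^{\beurou}_{\dagger}(T^{C})$, the dictionary shows that $F$ obeys a bound of the form \eqref{eq:boundednessLaplaceStronger}. Dividing by the ultrapolynomial $Q$ from Lemma \ref{l:ultrapolynomial} (with $|Q(z)|\geq e^{M_{\ell_{p}}(k|z|)}$ and $k\geq H^{n+2}$) removes the $M_{\ell_{p}}(|z|)$ growth and, through \eqref{eq:M2prime}, supplies the decay $(1+|\re z|)^{-(n+2)}$ required in \eqref{eq:holomorphicaslaplace}; thus $F/Q$ meets the hypotheses of Lemma \ref{l:holomorphicaslaplace} and equals $\laplace{g}{z}$ for a continuous $g$ with $\supp g\subseteq\Gamma$ and $N_{h_{p}}$-controlled growth. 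Such $g$ defines an element of $\sss^{\prime\beurou}_{\dagger}[\Gamma]$, and applying the ultradifferential operator $Q(D)$ of class $\beurou$, well defined by $(M.2)$ and leaving $\Gamma$-supports invariant, yields $f=Q(D)g\in\sss^{\prime\beurou}_{\dagger}[\Gamma]$ with $\laplace{f}{z}=Q(z)\laplace{g}{z}=F(z)$. For the bounded-set correspondence I would then describe the bounded subsets on the image side: in the Beurling $(DFS)$ case they are contained and uniformly bounded in a single step $\mathcal{O}_{\ell}(T^{C})$, while in the Roumieu $(FS)$ case they are bounded in every $\mathcal{O}_{\ell}(T^{C})$, which by \eqref{eqequivalencegrowth1} amounts to a single uniform estimate along some increasing $(\ell_{p})$. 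Through the dictionary these uniform bounds are precisely \eqref{eq:boundednessLaplaceStronger}, so the two implications of Theorem \ref{t:boundednessLaplaceStronger} say exactly that $\mathcal{L}$ and $\mathcal{L}^{-1}$ preserve bounded sets.

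Finally I would pass from boundedness to continuity. The space $\sss^{\prime\beurou}_{\dagger}[\Gamma]$ is bornological, being a closed subspace of the bornological space $\sss^{\prime\beurou}_{\dagger}(\mathbb{R}^{n})$ (a $(DFS)$ space in the Beurling case and a Fréchet space in the Roumieu case), and $\mathcal{O}^{\beurou}_{\dagger}(T^{C})$ is bornological for the same structural reason, being $(DFS)$ respectively $(FS)$. Since every bounded linear map out of a bornological space into a locally convex space is continuous, the boundedness of $\mathcal{L}$ and $\mathcal{L}^{-1}$ forces both to be continuous, so $\mathcal{L}$ is an isomorphism of locally convex spaces. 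I expect the genuine obstacle to lie entirely in the first step, namely in correctly matching the per-sequence weights $M_{\ell_{p}},N^{\ast}_{\ell_{p}}$ appearing in Theorem \ref{t:boundednessLaplaceStronger} with the scaled associated functions $M(\ell\,\cdot),\,N^{\ast}(\ell/\cdot)$ defining $\mathcal{O}_{\ell}(T^{C})$, and in handling the Roumieu case faithfully via \eqref{eqequivalencegrowth1}; once this dictionary is secured, the remainder is a formal assembly of results already proved.
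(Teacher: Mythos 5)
Your proposal is correct and follows essentially the same route as the paper: the authors likewise deduce the theorem from Theorem \ref{t:boundednessLaplaceStronger} (with surjectivity coming from the ultrapolynomial division and Lemma \ref{l:holomorphicaslaplace}), the dictionary \eqref{eqequivalencegrowth1} between constant/increasing sequences and the norms $\|\cdot\|_{\ell}$, and the bornological upgrade from boundedness to continuity. Your write-up merely makes explicit the details the paper compresses into a single paragraph.
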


\smallskip

\subsection*{Acknowledgement}
The authors would like to thank Andreas Debrouwere for useful discussions.

\end{document}